\documentclass[11pt,reqno]{amsart}
\usepackage{eurosym}
\usepackage{amsmath}
\usepackage{amsfonts}
\usepackage{amssymb}
\usepackage{amsthm}
\usepackage{amscd}
\usepackage{amsgen}
\usepackage{latexsym}
\usepackage{url}
\usepackage{mathtools}
\usepackage{needspace}
\usepackage[breaklinks=true,colorlinks=true,citecolor=blue,linkcolor=blue,urlcolor=red]{hyperref}
\everymath{\displaystyle}
\providecommand{\U}[1]{\protect\rule{.1in}{.1in}}
\newtheorem{theorem}{Theorem}[section]

\newtheorem{lemma}{Lemma}[section]

\newtheorem{remark}{Remark}[section]
\newtheorem{proposition}{Proposition}[section]
\numberwithin{equation}{section}
\allowdisplaybreaks
\catcode`\@=11
\@namedef{subjclassname@2010}{  \textup{2010} Mathematics Subject Classification}
\catcode`\@=12

\begin{document}
\title[Sign-changing solutions...]{\textbf{Sign-changing solutions for a \textit{Yamabe} type problem. }}
\author{Mohamed Bekiri }
\address{Mohamed Bekiri, Laboratory of Geomatics, Ecology and Environment (LGEO2E), Department of Biology, Faculty of Natural and Life Sciences, University Mustapha
Stambouli of Mascara, Algeria.}
\email{mohamed.bekiri@univ-mascara.dz}
\author{Mohammed Elamine Sebih }
\address{Mohammed Elamine Sebih, Laboratory of Geomatics, Ecology and Environment (LGEO2E), Department of Mathematics, Faculty of Exact Sciences, University Mustapha
Stambouli of Mascara, Algeria.}
\email{ma.sebih@univ-mascara.dz}

\thanks{}
\date{\today }
\subjclass[2020]{Primary 53A99, 58J05, 53C21}
\keywords{Sign-changing solutions, Yamabe type operator, Compact
manifold with boundary, variational method }
\dedicatory{ }

\begin{abstract}
In this paper, we investigate the
existence of sign-changing solutions to a critical elliptic equation involving a Yamabe type operator on a compact manifold
with boundary. The existence result is assured under some geometric
conditions.
\end{abstract}
\maketitle
%\tableofcontents

\section{Introduction and statement of results}

Given a compact Riemannian manifold $(M,\,g)$ with smooth boundary of dimension $n\geq 3$, $R_{g}$ its scalar curvature. In 1960, Yamabe~\cite{Yamabe1960} announced that for all compact Riemannian $n$-manifold
$(M,\, g)$ there exists a conformal metric $\tilde{g}$ to $g$ such that the scalar curvature $R_{\tilde{g}}$ is constant. In
other words, if we consider the conformal deformation in the form $\tilde{g}=u^{\frac{4}{n-2}}g$ (with $u\in C^{\infty}(M),\: u>0$), the scalar curvature $R_{\tilde{g}}$ satisfies the equation 
\begin{equation}
    \Delta _{g}u+\frac{n-2}{4\left( n-1\right) }R_{g}u=\frac{n-2}{4\left(
n-1\right) }R_{\tilde{g}}u^{2^{\star}-1}, \label{eq1-1}
\end{equation}
where $\Delta _{g}:=-\operatorname{div}_{g}(\nabla )$ is the Laplace-Beltrami operator, and $2^{\star}=\frac{2n}{n-2}$ is the critical Sobolev exponent. The Yamabe problem is reduced to solving the equation 
\begin{equation}
    \Delta _{g}u+\frac{n-2}{4\left( n-1\right) }R_{g}u=\lambda u^{2^{\star}-1}, \label{eq1-2}
\end{equation}
where $\lambda$ is a real parameter and the solution $u$ must be smooth and strictly positive.

This problem was completely solved by the combined efforts of Yamabe~\cite{Yamabe1960}, Trudinger~\cite{Trudinger1968}, Aubin \cite{Aubin1976} and Schoen~\cite{Schoen1984}. For more details, we refer the interested reader to the classical paper of Lee-
Parker~\cite{Lee-Parker1987}.

We note that, if $u$ is a sign-changing solution to problem \eqref{eq1-2}, then $\tilde{g}=|u|^{\frac{4}{n-2}}g$ is not a metric, as $\tilde{g}$ is not smooth and it vanishes on the set of zeros of $u$. 

A natural and interesting generalization of the equation \eqref{eq1-2} is to consider
\begin{equation}
    \Delta _{g}u+au=\lambda f |u|^{2^{\star}-2}u,\label{eq1-3}
\end{equation}
where $a$, $f$ are smooth functions, and $\lambda$ is a real parameter. The problem of finding sign-changing solutions of this equation has been studied by several authors. We cite for instance Atkinson-Brézis-Peletier~\cite{Atkinson-Brezis-Peleier1990}, Cao-Noussair~\cite{Cao-Noussair1995}, Djadli-Jourdain~\cite{Djadli-Jourdain2002}, Hebey-Vaugon~\cite{Hebey-Vaugon1994}, and Holcman~\cite{Holcman2001} to mention only few of many recent publications.

In this work, for a smooth compact Riemannian manifold $(M,g)$ of dimension $n>3$, with smooth boundary $\partial M\neq \emptyset$, we investigate the existence of sign-changing solutions for the following Yamabe-type problem 
\begin{equation}
\left\{ 
\begin{array}{lll}
-\operatorname{div}_{g}(a\nabla u)+bu=\lambda f|u|^{2^{\sharp }-2}u & \text{ in }&M, \\[0.5em] 
u=\phi & \text{ on }&\partial M,%
\end{array}%
\right.   \label{eq1-4}
\end{equation}%
where $a,\, b,\, f\in C^{\infty}(M)$, $a,\, f$ are positive functions on $M$ and $2^{\sharp }=\frac{2n}{n-2}$ is the critical Sobolev exponent and the boundary data $\phi\in C^{\infty }(\partial M)$ is a sign-changing function. More precisely, we look for
conditions on the functions $a,\, b$, and $f$ such that the problem \eqref{eq1-4} has a sign-changing solution.

Our results extend the work of Holcman~\cite{Holcman2001} where an elliptic problem involving the conformal Laplacian is considered. Our main result is stated in the following theorem:
\begin{theorem}
\label{Theorem1-1} Let $\left( M,g\right) $ be a compact Riemannian manifold
of dimension $n>3$ with smooth boundary $\partial M\neq\emptyset$. Assume that $a,\,b,\, f\in
C^{\infty}(M)$, with $a>0,\, f>0$ on $M$. Let $x_{0}\in \operatorname{Int}(M)$ be a point in the interior of $M$ such
that $f\left(  x_{0}\right)  =\max_{M}f$. Under the following assumptions

\begin{itemize}
\item The operator $-\operatorname{div}_{g}(a\nabla )+b$ is coercive,

\item
    at a point $x_{0}$, we have
\begin{equation}\label{eq1-5}
    (n-2)(n-4)\frac{\Delta_{g}f(x_{0})}{f(x_{0})}-2(n-2)R_{g}(x_{0})+8(n-1)\frac{b(x_{0})}{a(x_{0})}-(n^2-4)\frac{\Delta_{g}a(x_{0})}{a(x_{0})}<0,
\end{equation}
\end{itemize}
there exist a positive real number $\lambda$ and a nontrivial solution $%
u=w+h\in H_{0}^{1}\left( M\right) \cap C^{2,\alpha}\left( M\right) $ for
the equation \eqref{eq1-4}, where $w$ is the minimizer of the functional $I$
defined on $H_{0}^{1}\left( M\right) $ by 
\begin{equation*}
    I\left( w\right) =\int_{M}\left(a\vert
\nabla w\vert^{2}+bw^{2}\right)\,dv_{g},
\end{equation*}
under the constraint $\int_{M}f\left\vert w+h\right\vert
^{2^{\sharp}}\,dv_{g}=\gamma$, and $h$ is the unique solution of the linear problem 
\begin{equation*}
\left\lbrace 
\begin{array}{ll}
-\operatorname{div}_{g}(a\nabla h)+bh=0 & \text{ in } M, \\[0.5em] 
h=\phi  & \text{ on } \partial M.%
\end{array}
\right. 
\end{equation*}
Moreover, when the boundary data $\phi$ changes sign, then $u$ changes
sign.
\end{theorem}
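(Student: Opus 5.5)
We split $u=w+h$, where $h$ is the harmonic-type extension of $\phi$, and then solve a constrained minimization problem for $w\in H_0^1(M)$, following Holcman's strategy for the conformal Laplacian.

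\textbf{Step 1: the linear part.} Coercivity of $-\operatorname{div}_g(a\nabla)+b$ on $H_0^1(M)$, together with Lax--Milgram after lifting $\phi$ to a smooth extension, gives a unique $h$. Elliptic regularity gives $h\in C^{\infty}(M)$.

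\textbf{Step 2: the constrained minimization.} We set
\[
\mu_\gamma=\inf\Big\{ I(w): w\in H_0^1(M),\ \int_M f|w+h|^{2^\sharp}dv_g=\gamma\Big\}.
\]
We take $\gamma>\int_M f|h|^{2^\sharp}dv_g$ so that $w\equiv0$ is not admissible. Coercivity gives $I(w)\ge c\|w\|_{H^1}^2$, so minimizing sequences are bounded and converge weakly, up to a subsequence, to some $w$. Because the exponent is critical, the constraint may fail to pass to the limit. Concentration-compactness (or Brezis--Lieb), together with the sharp Sobolev inequality with the weights $a,f$, will show that compactness holds as soon as
\[
\mu_\gamma<\frac{\min_M\big(a^{n/2}f^{(2-n)/2}\big)^{2/n}}{K(n,2)^2}\,\big(\gamma-\textstyle\int_M f|h|^{2^\sharp}\big)^{2/2^\sharp}+\text{(cross terms)}.
\]
Here $K(n,2)$ is the best Sobolev constant.

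The heart of the argument is the precise form of this threshold. Since $h$ is bounded, a concentrating bubble interacts with $h$ only at lower order, and the threshold is essentially $\frac{(\max f)^{-2/2^\sharp}a(x_0)}{K(n,2)^2}\gamma^{2/2^\sharp}$. If $a$ is not minimal at $x_0$, we also optimize over the concentration point. Under the hypothesis (eq1-5) we will use the concentration at $x_0$, which is the natural reading.

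\textbf{Step 3: Lagrange multiplier and regularity.} The minimizer satisfies
\[
-\operatorname{div}(a\nabla w)+bw=\lambda' f|w+h|^{2^\sharp-2}(w+h).
\]
Since $h$ solves the homogeneous equation, $u=w+h$ solves (eq1-4) with a multiplier. Testing the equation with $w$ and using $I(w)>0$ gives $\lambda'\neq0$. One then argues $\lambda'>0$ by comparing with the variation $w\mapsto tw$, which moves the constraint monotonically. Alternatively, we normalize $\lambda$ by choosing $\gamma$. Regularity of $u$ follows by a Trudinger/Brezis--Kato bootstrap giving $u\in L^p$ for all $p$, and then Schauder estimates give $C^{2,\alpha}$. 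Sign-changing is immediate: $u=\phi$ on $\partial M$ and $u$ is continuous, so if $\phi$ changes sign then so does $u$.

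\textbf{Main obstacle: the strict inequality $\mu_\gamma<$ threshold.} We will test with
\[
w_\varepsilon=\eta\,(\varepsilon^2+r^2)^{(2-n)/2}\varepsilon^{(n-2)/2}-\text{(correction)},
\]
where $\eta$ is a cutoff around $x_0$ in normal coordinates and $w_\varepsilon$ is scaled to satisfy the constraint. We then expand $I$ and $\int f|w_\varepsilon+h|^{2^\sharp}$ to order $\varepsilon^2$, which is where $n>3$ (indeed $n\ge5$ for integrability of the $\varepsilon^2$ term; $n=4$ gives a $\log$ term with the same sign) enters.

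The contributions are as follows:
\begin{itemize}
\item The Taylor expansions of $a$ and $f$ give the $\Delta_g a(x_0)$ and $\Delta_g f(x_0)$ terms. The gradient term vanishes at a maximum of $f$; for $a$ it is killed by radial symmetry.
\item The volume element gives the $R_g(x_0)$ term.
\item The term $b$ gives $b(x_0)$.
\end{itemize}
The cross terms with $h$ are of order $\varepsilon^{(n-2)/2}$, which is $o(\varepsilon^2)$ when $n>6$. For smaller $n$ they must be handled by a sign or cancellation argument; I would first try to show their leading contribution is nonpositive or of higher order, using $\int w_\varepsilon^{2^\sharp-1}h$. Collecting the coefficients should reproduce exactly the combination in (eq1-5), so its negativity yields the strict inequality.
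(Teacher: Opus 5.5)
Your route has a genuine gap, and it sits at the step you yourself flag as the main obstacle. For $\phi\not\equiv 0$ you need the critical constrained infimum to be attained, but the threshold you propose is not the right one.

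The constraint $\int_M f|w+h|^{2^\sharp}dv_g=\gamma$ is not homogeneous. So when compactness fails, Brezis--Lieb plus concentration-compactness only give $\mu_\gamma\ge\mu_{\gamma_0}+S(\gamma-\gamma_0)^{2/2^\sharp}$. Here $w$ is the weak limit, $\gamma_0=\int_M f|w+h|^{2^\sharp}dv_g<\gamma$ is uncontrolled, and $S=K_0^{-1}\min_M af^{-2/2^\sharp}$. You must therefore beat the infimum of the right-hand side over all $\gamma_0$. Writing $\gamma_h=\int_M f|h|^{2^\sharp}dv_g$, that target is not $S(\gamma-\gamma_h)^{2/2^\sharp}$, and certainly not $S\gamma^{2/2^\sharp}$.

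Perturbing $w=0$ along any $\psi$ with $\int_M f|h|^{2^\sharp-2}h\psi\,dv_g\neq0$ gives $\mu_{\gamma_h+\delta}=O(\delta^2)$. Hence that infimum lies strictly below $S(\gamma-\gamma_h)^{2/2^\sharp}$, by a margin independent of $\varepsilon$. A bubble at $x_0$ rescaled to satisfy the constraint, by contrast, has energy tending to $K_0^{-1}a(x_0)f(x_0)^{-2/2^\sharp}(\gamma-\gamma_h)^{2/2^\sharp}\ge S(\gamma-\gamma_h)^{2/2^\sharp}$. No $\varepsilon^2$ correction can close this. So the single-bubble expansion whose $\varepsilon^2$ coefficient is the combination in \eqref{eq1-5} does not decide compactness when $h\not\equiv0$. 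You would need bubble-plus-background test functions, whose leading correction is an interaction term of order $\varepsilon^{(n-2)/2}$ — exactly the cross terms you leave open for $n\le 6$. Separately, a bubble at $x_0$ only reaches $a(x_0)f(x_0)^{-2/2^\sharp}$, not $\min_M af^{-2/2^\sharp}$.

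The idea you are missing is that the paper never needs compactness at the critical exponent. Its steps are:
\begin{itemize}
\item It minimizes $I$ on the subcritical sets $\mathcal{H}_{\gamma,q}$, where the embedding is compact.
\item It gets $\lambda_{\gamma,q}>0$ from H\"older: $\int_M f|w+h|^{q-2}(w+h)h\,dv_g\le\gamma^{1-1/q}(\int_M f|h|^q dv_g)^{1/q}<\gamma$. This is the clean form of your $w\mapsto tw$ remark; choosing $\gamma$ cannot fix the sign of $\lambda$.
\item It bounds $\lambda_{\gamma,q}$ and $w_{\gamma,q}$ uniformly through $I(w_{\gamma,q})\le t_{\gamma,q}^2I(\psi_1)$ and coercivity.
\item It passes to the limit $q\to2^\sharp$ in the weak Euler--Lagrange equation, which needs only weak and a.e.\ convergence.
\end{itemize}
Nontriviality of $u=w+h$ is then free when $\phi\not\equiv0$, since $u=\phi$ on $\partial M$. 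The test-function condition \eqref{eq1-5} is invoked only when $\phi\equiv0$, i.e.\ $h\equiv0$. There are then no cross terms, and the $\varepsilon^2$ expansion (or $\varepsilon^2\log\frac{1}{\varepsilon^2}$ for $n=4$) is the whole story.

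What your route would buy, if it could be closed, is that $w$ is a genuine constrained minimizer at the critical exponent. The weak-limit argument does not by itself give this, since $\int_M f|w+h|^{2^\sharp}dv_g=\gamma$ need not survive the limit. Your worry about $a(x_0)$ versus $\min_M a$ is also legitimate and is not settled in the paper either. Its criterion \eqref{eq4-9} involves $\min_M a$, while its test quotient \eqref{eq5-3} uses $a(x_0)$, so the two match only when $a(x_0)=\min_M a$.
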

The proof of Theorem \ref{Theorem1-1} is based on arguments introduced by Yamabe~\cite{Yamabe1960}. The strategy begins by constructing a minimizing sequence of solutions for the following family of sub-critical problems:
\begin{equation}
\left\{ 
\begin{array}{lll}\label{eq1-6}
-\operatorname{div}_{g}(a\nabla u)+bu=\lambda f|u|^{q-2}u & \text{ in }&M,\\[0.5em]
u=\phi _{1} & \text{ on }&\partial M,%
\end{array}%
\right.
\end{equation}
where $q\in \left( 2,2^{\sharp }\right) $ is the sub-critical exponent.%
\newline
In a second step, we prove that under suitable geometric conditions, this minimizing sequence converges to a nontrivial smooth solution of the critical problem \eqref{eq1-4}, as the sub-critical exponent $q$ tends to $2^{\sharp }$.

The paper is organized as follows. In section Sec.~\ref{sec:sec2}, we recall some definitions and preliminary results that will be used throughout the paper. Sections Sec.~\ref{sec:sec3} and Sec.~\ref{sec:sec4} are devoted to the study of 
the sub-critical and critical equations respectively. Finally, section Sec.~\ref{sec:sec5} is reserved to test functions.

%\section{Preliminaries}
\section{Terminology and general notations}
\label{sec:sec2} In this section, we recall some results and properties that will be needed in the sequel. Let $\left( M, g\right)$ be a smooth compact Riemannian manifold of
dimension $n \geq3$, with boundary. The Sobolev space $H_{0}^{1}\left(M\right) $ is defined as the completion of $C^{\infty}\left( M\right) $ (The
space of smooth functions) with respect to the following equivalent norm:
$
\Vert . \Vert_{H_{0}^{1}\left( M\right)}$ (see Aubin~\cite{Aubin1998})

\begin{equation}
\left\Vert u\right\Vert _{H_{0}^{1}\left( M\right) }^{2} =\Vert \nabla
u\Vert_{2}^{2}+\Vert u\Vert_{2}^{2}.  \label{eq2-1}
\end{equation}

\begin{remark}
The Sobolev embedding theorem asserts that the inclusion $H_{0}^{1}\left( M\right)\subset L^q \left( M\right)$ is
\begin{itemize}
    \item continuous for all $1<q\leq 2^{\sharp}=\frac{2n}{n-2}$,
    \item compact for all $1<q\neq 2^{\sharp}$.
\end{itemize}
\end{remark}
Now, we define $K_{0}>0$, to be the best constant in the Euclidean Sobolev inequality 
\begin{equation*}
\left\Vert u\right\Vert _{2^{\sharp}}^{2}\leq K_{0}\left\Vert \nabla
u\right\Vert _{2}^{2}.
\end{equation*}
The exact value of this constant was computed by Talenti~\cite{Talenti1976}. He showed that 
\begin{equation}
\frac{1}{K_{0}} = \frac{n(n - 2)}{4} \, \omega_n^{2/n},
\label{eq2-2}
\end{equation}
where $\omega_{n}$, denotes the volume of the unit $n$-sphere $(\mathbb{S}%
^{n},h)$, endowed with its standard metric.

We say that the operator $ P_g:=-\operatorname{div}_{g}(a\nabla )+b$ defined on $H_{0}^{1}\left( M\right)$, is coercive if there exists $\Lambda >0$, such that for all $u\in
H_{0}^{1}\left( M\right) $
\begin{equation*}
\int_{M} uP_{g}(u)\,dv_{g}\geq\Lambda\left\Vert u\right\Vert
_{H_{0}^{1}\left( M\right) }^{2}, 
\end{equation*}
where 
\begin{equation*}
    \int_{M} uP_{g}(u)\,dv_{g}=\int_{M}\left(a\vert
\nabla u\vert^{2}+bu^{2}\right)\,dv_{g}.
\end{equation*}
We also recall the following Sobolev inequality, that will play a key role in several estimates. This inequality was established by Hebey and Vaugon~\cite{Hebey-Vaugon1995}.
\begin{lemma}
\label{lemme2-1} Let $\left( M,g\right) $ be a smooth compact Riemannian
manifold of dimension $n>3$ with smooth boundary. Then for any $\varepsilon>0,$
there exists $B_{\varepsilon}\in\mathbb{R}$ such that for all $u\in
H_{0}^{1}\left( M\right) $ one has 
\begin{equation}
\left\Vert u\right\Vert _{2^{\sharp}}^{2}\leq\left( K_{0}
+\varepsilon\right) \left\Vert \nabla u\right\Vert _{2}
^{2}+B_{\varepsilon}\left\Vert u\right\Vert _{2}^{2}.
\label{eq2-3}
\end{equation}
\end{lemma}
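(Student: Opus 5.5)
The plan is to reduce the inequality to the Euclidean Sobolev inequality $\Vert v\Vert_{2^{\sharp}}^{2}\leq K_{0}\Vert\nabla v\Vert_{2}^{2}$ by localizing with a partition of unity subordinate to small charts. In each such chart the metric is almost Euclidean, and we absorb the gradient of the cut-off functions into the $L^{2}$ term. Since $C_{c}^{\infty}(\operatorname{Int}M)$ is dense in $H_{0}^{1}(M)$ and both sides of \eqref{eq2-3} are continuous in the $H_{0}^{1}$ norm (by the continuous embedding recalled in the Remark above), it suffices to prove \eqref{eq2-3} for smooth $u$ vanishing near $\partial M$.

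\textbf{Step 1 (almost Euclidean charts).} Fix $\delta>0$. By compactness, cover $M$ by finitely many charts $(\Omega_{i},\varphi_{i})$, $i=1,\dots,N$. For interior points we use small geodesic balls in normal coordinates. For boundary points we use small half-balls in Fermi-type coordinates, so that $\varphi_{i}(\Omega_{i}\cap\partial M)\subset\{x_{n}=0\}$. We choose the charts small enough that
\[
(1+\delta)^{-1}\delta_{kl}\leq g_{kl}\leq(1+\delta)\delta_{kl}
\]
as bilinear forms on $\varphi_{i}(\Omega_{i})$. Then $dv_{g}$ and $|\nabla\cdot|_{g}^{2}$ are comparable to their Euclidean counterparts up to factors $(1+\delta)^{c(n)}$.

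Let $v$ be supported in $\Omega_{i}$ and vanish on $\partial M$. Then $v\circ\varphi_{i}^{-1}$, extended by zero, lies in $H^{1}(\mathbb{R}^{n})$. For a boundary chart this uses $v=0$ on $\{x_{n}=0\}$, which is exactly where the hypothesis $u\in H_{0}^{1}(M)$ enters. The Euclidean inequality then gives
\[
\Vert v\Vert_{2^{\sharp}}^{2}\leq K_{0}(1+\delta)^{c(n)}\Vert\nabla v\Vert_{2}^{2}.
\]

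\textbf{Step 2 (partition of unity with smooth square roots).} Take a partition of unity of the form $\eta_{i}=\alpha_{i}^{2}/\sum_{j}\alpha_{j}^{2}$, with $\alpha_{i}\in C^{\infty}$ supported in $\Omega_{i}$. Then $\sqrt{\eta_{i}}$ is smooth and $|\nabla\sqrt{\eta_{i}}|\leq C$. By Minkowski's inequality in $L^{n/(n-2)}$,
\[
\Vert u\Vert_{2^{\sharp}}^{2}=\Vert u^{2}\Vert_{\frac{n}{n-2}}\leq\sum_{i}\Vert\eta_{i}u^{2}\Vert_{\frac{n}{n-2}}=\sum_{i}\Vert\sqrt{\eta_{i}}\,u\Vert_{2^{\sharp}}^{2}\leq K_{0}(1+\delta)^{c(n)}\sum_{i}\int_{M}|\nabla(\sqrt{\eta_{i}}\,u)|^{2}\,dv_{g}.
\]

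\textbf{Step 3 (expanding the gradient).} We write
\[
|\nabla(\sqrt{\eta_{i}}u)|^{2}=\eta_{i}|\nabla u|^{2}+2\sqrt{\eta_{i}}\,u\,\nabla\sqrt{\eta_{i}}\cdot\nabla u+u^{2}|\nabla\sqrt{\eta_{i}}|^{2}.
\]
The cross term is bounded by $\delta\eta_{i}|\nabla u|^{2}+\delta^{-1}u^{2}|\nabla\sqrt{\eta_{i}}|^{2}$. Summing over $i$ and using $\sum_{i}\eta_{i}=1$ yields
\[
\Vert u\Vert_{2^{\sharp}}^{2}\leq K_{0}(1+\delta)^{c(n)+1}\Vert\nabla u\Vert_{2}^{2}+C_{\delta}\Vert u\Vert_{2}^{2}.
\]
Choosing $\delta$ so that $K_{0}(1+\delta)^{c(n)+1}\leq K_{0}+\varepsilon$ and setting $B_{\varepsilon}=C_{\delta}$ gives \eqref{eq2-3}.

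\textbf{Main obstacle.} The delicate point is Step 1, and specifically keeping the leading constant exactly $K_{0}$ up to $(1+\delta)$ factors. This needs charts on which the metric is uniformly close to Euclidean, including near $\partial M$. It also needs the observation that the zero boundary trace allows extension by zero, so the Euclidean constant $K_{0}$ applies rather than the half-space constant $2^{2/n}K_{0}$. The latter would be forced for functions without zero boundary values.
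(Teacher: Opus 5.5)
Your proof is correct. It is the classical localization argument due to Aubin: almost-Euclidean charts, a partition of unity whose square roots are smooth, Minkowski's inequality in $L^{n/(n-2)}$ applied to $u^{2}$, and absorption of the cut-off gradients into the $L^{2}$ term. Each step checks out. In particular, in boundary charts you extend by zero using the zero trace, which is what gives the full-space constant $K_{0}$ rather than the half-space constant $2^{2/n}K_{0}$.

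The paper gives no proof of this lemma; it only cites Hebey--Vaugon. Their result is the stronger sharp inequality with $\varepsilon=0$, stated for compact manifolds without boundary and proved by a much more delicate blow-up/concentration analysis. To apply it literally to $H_{0}^{1}(M)$, one would also have to place $M$ inside a closed Riemannian manifold and extend $u$ by zero.

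Your route is elementary, self-contained, and treats the boundary directly. The citation buys the case $\varepsilon=0$, which the paper never needs. In the proof of Proposition~\ref{proposition3-4-2}, the $B_{\varepsilon}$ term is killed by $\Vert w_{\gamma,q}\Vert_{2}\to 0$ and $\varepsilon$ is then sent to $0$, so the $\varepsilon$-version you prove is exactly sufficient.

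Your reduction to $C_{c}^{\infty}(\operatorname{Int}M)$ also makes explicit a point the paper glosses over. $H_{0}^{1}(M)$ must mean the closure of functions compactly supported in the interior. Read literally, the paper's definition as the completion of $C^{\infty}(M)$ gives $H^{1}(M)$, and there the lemma fails for $\varepsilon<(2^{2/n}-1)K_{0}$.
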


\section{Construction of Subcritical Solutions}

\label{sec:sec3} %\section{Construction of a solution to sub-critical case}
In this section, we prove the existence of solutions to the subcritical problem \eqref{eq1-6}. Firstly, we state the following useful lemma, which extends the boundary data $ \phi$ to a unique smooth solution of the homogeneous problem associated to \eqref{eq1-4}, defined on the whole manifold $M$.
\begin{lemma}
Let $(M, g)$ be a smooth compact Riemannian manifold of dimension $n > 3$ with smooth boundary. Assume that the operator $-\operatorname{div}_g(a \nabla) + b$ is coercive. Then the following problem:
\begin{equation} 
\left\{
\begin{array}{lll}
-\operatorname{div}_g(a \nabla h) + b h = 0 & \text{in } &M, \\[0.5em]
h = \phi & \text{on }&\partial M,
\end{array}
\right.\label{eq3-1}
\end{equation}
has a unique solution $h \in C^{2,\alpha}(M)$ for some $\alpha \in (0,1)$.
\end{lemma}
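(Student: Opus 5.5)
We reduce the inhomogeneous Dirichlet problem \eqref{eq3-1} to a homogeneous one, solve that by Lax--Milgram using coercivity, and then bootstrap regularity with Schauder theory.

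\emph{Reduction.} Since $\partial M$ is smooth and $\phi\in C^{\infty}(\partial M)$, we first extend $\phi$ to a function $\Phi\in C^{\infty}(M)$ with $\Phi=\phi$ on $\partial M$. This can be done with a collar neighbourhood of the boundary and a cutoff in the normal variable. We then look for $h=\Phi+v$ with $v\in H_{0}^{1}(M)$ solving
\begin{equation*}
-\operatorname{div}_g(a\nabla v)+bv=F:=\operatorname{div}_g(a\nabla \Phi)-b\Phi\in C^{\infty}(M).
\end{equation*}

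\emph{Existence and uniqueness of $v$.} Consider the bilinear form
\begin{equation*}
B(u,w)=\int_M\left(a\langle\nabla u,\nabla w\rangle+buw\right)dv_g
\end{equation*}
on $H_{0}^{1}(M)$. Since $a$ and $b$ are smooth on the compact manifold $M$, $B$ is continuous, and by the coercivity assumption $B(u,u)\geq\Lambda\|u\|^2_{H_0^1(M)}$. The map $w\mapsto\int_M Fw\,dv_g$ is a continuous linear functional on $H_0^1(M)$. The Lax--Milgram theorem therefore gives a unique weak solution $v\in H_0^1(M)$.

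Uniqueness of $h$ also follows from coercivity. If $h_1$ and $h_2$ both solve \eqref{eq3-1}, then $h_1-h_2\in H^1_0(M)$ solves the homogeneous problem. Testing the equation with $h_1-h_2$ gives $\Lambda\|h_1-h_2\|^2_{H_0^1(M)}\leq 0$, so $h_1=h_2$. This holds in the weak class, and hence a fortiori among $C^{2,\alpha}$ solutions.

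\emph{Regularity (the main step).} The operator $-\operatorname{div}_g(a\nabla\cdot)+b$ is uniformly elliptic with smooth coefficients, because $a>0$ on the compact manifold $M$. The right-hand side $F$ and the boundary data are smooth. We bootstrap as follows.
\begin{itemize}
\item Global $L^p$ (Agmon--Douglis--Nirenberg) estimates up to the boundary, applied in boundary charts, give $v\in H^2(M)$ and then $v\in W^{2,p}(M)$ for every $p<\infty$.
\item The Sobolev embedding then gives $v\in C^{1,\alpha}(M)$.
\item Schauder theory for the Dirichlet problem (Gilbarg--Trudinger, Theorem~6.14 and the associated regularity results) gives $v\in C^{2,\alpha}(M)$, and in fact $v\in C^{\infty}(M)$.
\end{itemize}
Alternatively, one can apply the Fredholm alternative in $C^{2,\alpha}$ directly: injectivity comes from the uniqueness shown above, and existence follows from the Schauder theory. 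The only point needing care is transferring these Euclidean estimates to the manifold via a finite atlas and a partition of unity; this is standard.

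Setting $h=\Phi+v\in C^{2,\alpha}(M)$ then gives the required unique solution of \eqref{eq3-1}.
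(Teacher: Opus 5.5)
Your proposal is correct and follows the same route as the paper: Lax--Milgram via coercivity for existence and uniqueness, then standard elliptic regularity. You fill in details the paper omits, notably the reduction $h=\Phi+v$ with $v\in H_0^1(M)$, which makes precise what the paper only calls a ``compatible'' boundary condition. One citation needs adjusting: Gilbarg--Trudinger's Theorem~6.14 assumes a sign on the zeroth-order term (here $b\ge 0$), which is not given, so for the $C^{2,\alpha}$ step rely instead on the regularity theorems (6.17/6.19) or on the Fredholm-alternative argument you sketch, with injectivity coming from coercivity.
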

\begin{proof}
The proof is classical. Existence and uniqueness follow from Lax-Milgram theorem, since the operator is coercive and the boundary condition is compatible. Furthermore, standard elliptic regularity theory implies that $h \in C^{2,\alpha}(M)$ for some $\alpha \in (0,1)$.
\end{proof}

We look for solutions to \eqref{eq1-4} in the form $u= w + h$, where $h$ is the solution to the linear problem \eqref{eq3-1} and $w$ solves
\begin{equation}
\left\{ 
\begin{array}{lll}
-\operatorname{div}_{g}(a\nabla w)+bw=\lambda f|w+h|^{2^{\sharp }-2}(w+h) & \text{ in }&M, \\[0.5em] 
w=0 & \text{ on }&\partial M.
\end{array}%
\right.   \label{eq3-2}
\end{equation}%
Given $q \in (2, 2^{\sharp})$, we consider the sub-critical problem associated with the critical problem~\eqref{eq3-2}:

\begin{equation}
\left\{ 
\begin{array}{lll}
-\operatorname{div}_{g}(a\nabla w)+bw=\lambda f|w+h|^{q-2}\left( w+h\right)  & \text{ in }&M, \\[0.5em] 
w=0
& \text{ on }&\partial M.%
\end{array}%
\right.   \label{eq3-3}
\end{equation}%
We define the energy functional $I$ on $H_{0}^{1}(M)$ by 
\begin{equation*}
    I(w)=\int_{M}\left(a\vert
\nabla w\vert^{2}+bw^{2}\right)\,dv_{g}.
\end{equation*}
We denote by $\mu_{\gamma, q}$ the minimum of the functional $I$ over the constraint set
\begin{equation*}
\mathcal{H}_{\gamma, q} = \left\{ w \in H_{0}^{1}(M) \;\text{such that}\; \int_{M} f |w + h|^{q} \, \,dv_{g} = \gamma \right\},
\end{equation*}
where $\gamma$ is a positive constant satisfying the condition
\begin{equation} \label{condition3-4}
\int_{M} f |h|^{2^{\sharp}} \, \,dv_{g} < \gamma.
\end{equation}
The following lemma, guarantees that the constraint set $\mathcal{H}_{\gamma,q}$ is non empty.

\begin{lemma} \label{lemma3-2}
If
\[
\int_{M} f |h|^{2^{\sharp}} \, \,dv_{g} < \gamma,
\]
then, the constraint set $\mathcal{H}_{\gamma, q}$ is non empty.
\end{lemma}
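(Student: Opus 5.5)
The plan is an intermediate value argument along a one-parameter family in $H_{0}^{1}(M)$. I will define
$$G(w)=\int_{M} f\,|w+h|^{q}\,dv_{g},$$
and produce two elements $w_{0},w_{1}\in H_{0}^{1}(M)$ with $G(w_{0})<\gamma<G(w_{1})$. I will then connect them by the segment $t\mapsto w_{0}+t(w_{1}-w_{0})$ and apply the intermediate value theorem.

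\emph{Continuity.} By the Sobolev embedding recalled in the Remark of Section~\ref{sec:sec2}, with $q<2^{\sharp}$, the map $w\mapsto w+h$ is continuous from $H_{0}^{1}(M)$ into $L^{q}(M)$; here $h\in C^{2,\alpha}(M)$ is bounded by the preceding lemma. The map $v\mapsto\int_{M} f|v|^{q}\,dv_{g}$ is continuous on $L^{q}(M)$ since $f$ is bounded. Hence $t\mapsto G(w_{0}+t(w_{1}-w_{0}))$ is a continuous real function on $[0,1]$, and some $t$ gives $G=\gamma$, i.e.\ $\mathcal H_{\gamma,q}\neq\emptyset$.

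\emph{Upper point.} Fix any $v\in C_{c}^{\infty}(\operatorname{Int}M)$, $v\not\equiv 0$, and take $w_{1}=sv$ with $s$ large. By Minkowski's inequality in the weighted space $L^{q}(f\,dv_{g})$,
$$G(sv)^{1/q}\ \ge\ s\Big(\int_{M} f|v|^{q}\,dv_{g}\Big)^{1/q}-\Big(\int_{M} f|h|^{q}\,dv_{g}\Big)^{1/q}.$$
The right-hand side tends to $+\infty$ as $s\to\infty$, since $f>0$. So $G(w_{1})>\gamma$ for $s$ large.

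\emph{Lower point.} This is the step where the hypothesis $\int_{M} f|h|^{2^{\sharp}}\,dv_{g}<\gamma$ would naturally be used. However, that hypothesis directly controls $G(0)$ only when $q=2^{\sharp}$: Hölder's inequality gives $\int f|h|^{q}\le(\int f|h|^{2^{\sharp}})^{q/2^{\sharp}}(\int f)^{1-q/2^{\sharp}}$, which need not be below $\gamma$. I would therefore not rely on $w_{0}=0$ and instead cancel $h$ away from the boundary.

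Let $\eta_{\varepsilon}\in C^{\infty}(M)$ satisfy $0\le\eta_{\varepsilon}\le1$, $\eta_{\varepsilon}=0$ on $\partial M$, and $\eta_{\varepsilon}=1$ outside the collar $\{d(x,\partial M)<\varepsilon\}$. Set $w_{0}=-\eta_{\varepsilon}h$. Since $h$ is smooth and $\eta_{\varepsilon}$ vanishes on $\partial M$, we have $w_{0}\in H_{0}^{1}(M)$. Moreover $w_{0}+h=(1-\eta_{\varepsilon})h$ is supported in the collar, so
$$G(w_{0})\le \max_{M} f\,\|h\|_{\infty}^{q}\,\operatorname{Vol}_{g}\{d(x,\partial M)<\varepsilon\}\longrightarrow0 \quad (\varepsilon\to0).$$
Choosing $\varepsilon$ small gives $G(w_{0})<\gamma$. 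Only $\gamma>0$ is needed here, and this is guaranteed by \eqref{condition3-4}.

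\emph{Conclusion.} With $w_{0}$ and $w_{1}$ in hand, the continuity argument above finishes the proof. In the critical case $q=2^{\sharp}$, one may take $w_{0}=0$ directly, and then the stated hypothesis is exactly $G(0)<\gamma$; continuity of $G$ on $H_{0}^{1}(M)$ still holds because the embedding into $L^{2^{\sharp}}(M)$ is continuous.
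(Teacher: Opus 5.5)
Your proof is correct. It shares the paper's skeleton: an intermediate value argument along a line in $H_0^1(M)$, with the upper endpoint coming from growth at infinity. Your lower endpoint, however, is genuinely different, and better.

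The paper works on the ray $t\mapsto t\psi_1$, where $\psi_1$ is the first Dirichlet eigenfunction of $-\operatorname{div}_g(a\nabla)$. Its lower endpoint is therefore $w_0=0$, and it needs $F_q(0)=\int_M f|h|^q\,dv_g<\gamma$. As you noticed, this does not follow from $\int_M f|h|^{2^{\sharp}}\,dv_g<\gamma$ for arbitrary $q$. It follows only for $q$ close to $2^{\sharp}$, by continuity of $q\mapsto\int_M f|h|^q\,dv_g$ (since $h$ is bounded). The paper leaves this restriction implicit in its phrase ``for $q$ close to $2^{\sharp}$''.

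Your choice $w_0=-\eta_\varepsilon h$ makes $G(w_0)$ arbitrarily small and removes that restriction. It shows $\mathcal{H}_{\gamma,q}\neq\emptyset$ for every $q\in(2,2^{\sharp}]$ and every $\gamma>0$. The hypothesis then matters only later, in Proposition~\ref{proposition4-1}, for the sign and boundedness of $\lambda_{\gamma,q}$.

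What the paper's route buys is an explicit competitor $t_{\gamma,q}\psi_1$. Proposition~\ref{proposition4-1} reuses it, via the implicit function theorem, to bound $\mu_{\gamma,q}$ uniformly in $q$. Your construction serves that purpose more simply. For $q$ in a compact range you can fix $w_0$ and $w_1$ independently of $q$, because both endpoint estimates are uniform in $q$. This gives the $q$-independent bound $\mu_{\gamma,q}\le\max_{t\in[0,1]}I\bigl(w_0+t(w_1-w_0)\bigr)$.
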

\begin{proof}
We set
\[
F_{q}(t) = \int_{M} f\, \left| t\psi_{1} + h \right|^{q} \, \,dv_{g},
\]
where $\psi_{1}$ is the eigenfunction corresponding to the first eigenvalue
$\lambda_{1}$ of the operator $-\operatorname{div}_{g}(a\nabla)$, that is
\[
\begin{cases}
-\operatorname{div}_{g}(a\nabla \psi_{1}) = \lambda_{1} \psi_{1} & \text{in } M, \\[0.4em]
\psi_{1} = 0 & \text{on } \partial M.
\end{cases}
\]
It is clear that $F_{q}$ is continuous for $q$ close to $2^{\sharp}$ and satisfies
\[
F_{q}(0) = \int_{M} f\, |h|^{q} \, \,dv_{g} < \gamma,
\quad
\lim_{t \to +\infty} F_{q}(t) = +\infty.
\]
By the intermediate value theorem, there exists $t_{\gamma,q} > 0$ such that
\[
F_{q}(t_{\gamma,q}) = \int_{M} f\, |t_{\gamma,q} \psi_{1} + h|^{q} \, \,dv_{g} = \gamma.
\]
Hence $t_{\gamma,q} \psi_{1} \in \mathcal{H}_{\gamma,q}$, which means that the constraint set is non empty, ending the proof of the lemma.
\end{proof}

By the following proposition, we show that the minimum $\mu_{\gamma,q}$ of the functional $I$, is attained by a smooth function belonging to $\mathcal{H}_{\gamma,q}$.

\begin{proposition} \label{proposition3-1}
Let $(M, g)$ be a smooth, compact Riemannian manifold of dimension $n > 3$ with smooth boundary. Assume that:
\begin{itemize}
    \item[(i)] the operator $-\operatorname{div}_{g}(a\nabla) + b$ is coercive;
    \item[(ii)] $\int_{M} f |h|^{2^{\sharp}} \,dv_{g} < \gamma$.
\end{itemize}
Then, for all $q \in (2, 2^{\sharp})$, there exist a real number $\lambda_{\gamma,q}$ and a smooth function $w_{\gamma,q} \in \mathcal{H}_{\gamma,q}$, solution to the problem \eqref{eq3-3}, which is a minimizer of $I$.
\end{proposition}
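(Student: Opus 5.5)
We argue by the direct method of the calculus of variations on $\mathcal{H}_{\gamma,q}$, which is nonempty by Lemma~\ref{lemma3-2}. Since $I(w)\ge \Lambda\|w\|_{H_0^1(M)}^2\ge 0$ by coercivity, $\mu_{\gamma,q}=\inf_{\mathcal{H}_{\gamma,q}} I$ is finite and nonnegative. Take a minimizing sequence $(w_k)\subset\mathcal{H}_{\gamma,q}$. Coercivity bounds $(w_k)$ in $H_0^1(M)$, so after extraction $w_k\rightharpoonup w$ weakly in $H_0^1(M)$. Because $q<2^{\sharp}$, the embedding $H_0^1(M)\subset L^q(M)$ is compact, so $w_k\to w$ strongly in $L^q$ and in $L^2$. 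Since $h$ is bounded, $\int_M f|w_k+h|^q\,dv_g\to\int_M f|w+h|^q\,dv_g$, hence $w\in\mathcal{H}_{\gamma,q}$. Write $I(w)=\int_M a|\nabla w|^2\,dv_g+\int_M bw^2\,dv_g$. The first term is convex and continuous, hence weakly lower semicontinuous. The second converges by $L^2$ strong convergence. Therefore $I(w)\le\liminf I(w_k)=\mu_{\gamma,q}$, and $w=:w_{\gamma,q}$ is a minimizer.

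Next we derive the Euler--Lagrange equation. Set $G(v)=\int_M f|v+h|^q\,dv_g$. It is $C^1$ on $H_0^1(M)$ for $q>2$, with
\[
G'(w)\cdot\varphi=q\int_M f|w+h|^{q-2}(w+h)\varphi\,dv_g .
\]
To apply the Lagrange multiplier theorem we need $G'(w)\neq 0$. Suppose $G'(w)=0$. Then $f|w+h|^{q-2}(w+h)=0$ a.e. in $M$, because $C_c^\infty(\operatorname{Int}M)$ is dense. Since $f>0$, this gives $w+h=0$ a.e., so $G(w)=0\neq\gamma$, a contradiction. Hence there is $\tilde\lambda$ with $I'(w)=\tilde\lambda G'(w)$. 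Putting $\lambda_{\gamma,q}=q\tilde\lambda/2$, $w$ is a weak solution of \eqref{eq3-3}. We can also test against $\varphi=w$ to get the relation $\mu_{\gamma,q}=\lambda_{\gamma,q}\int_M f|w+h|^{q-2}(w+h)w\,dv_g$. This identity will be useful later for the sign of $\lambda$, though it is not needed here.

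The remaining step, regularity, is the main technical point, but it is standard because the problem is subcritical. Set $u=w+h$. It solves $-\operatorname{div}_g(a\nabla u)+bu=\lambda_{\gamma,q} f|u|^{q-2}u$ with $u=\phi$ on $\partial M$. The nonlinearity satisfies $|f|u|^{q-2}u|\le C(1+|u|)^{q-1}$ with $q-1<2^{\sharp}-1$. A Brezis--Kato or Moser iteration argument, or simply a bootstrap through $W^{2,p}$ estimates (from $u\in L^{p}$ we get $f|u|^{q-1}\in L^{p/(q-1)}$, hence $u\in W^{2,p/(q-1)}$ and a Sobolev gain), gives $u\in L^p(M)$ for every $p<\infty$. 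The gain per step is strict because $q<2^{\sharp}$, so finitely many steps suffice. Then $u\in W^{2,p}$ for all $p$, so $u\in C^{1,\alpha}(M)$ up to the boundary. The right-hand side is then $C^{0,\alpha}$, since $t\mapsto|t|^{q-2}t$ is $C^{0,\min(1,q-1)}$-Hölder. Schauder theory with the smooth boundary data $\phi$ gives $u\in C^{2,\alpha}(M)$. Since $h\in C^{2,\alpha}(M)$, we conclude $w=u-h\in C^{2,\alpha}(M)$. "Smooth" in the statement should be read in this $C^{2,\alpha}$ sense: the map $t\mapsto|t|^{q-2}t$ is not $C^\infty$ at $0$, and $u$ changes sign.
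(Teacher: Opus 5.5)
Your proof is correct and follows the same route as the paper: the direct method on $\mathcal{H}_{\gamma,q}$ using the compact embedding $H_0^1(M)\subset L^q(M)$ for $q<2^{\sharp}$, then a Lagrange multiplier, then elliptic regularity up to $C^{2,\alpha}$. The differences are that you bound the minimizing sequence directly by coercivity rather than by the paper's H\"older estimate of $\int_M bu^2\,dv_g$ through the constraint, and that you make explicit several points the paper passes over: weak lower semicontinuity of $I$ obtained by splitting off the $b$-term, the nondegeneracy of the constraint derivative needed for the multiplier rule, and the subcritical bootstrap.
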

\begin{proof}
Firstly, we prove that the minimum $\mu_{\gamma,q}$ is finite.  
For $u\in H^{1}_{0}(M)$, H\"older’s inequality yields
\begin{equation}
    \bigg|\int_M bu^{2}\, dv_{g}\bigg|\leq V_{g}(M)^{1-\frac{2}{q}}\,
    \|b\|_{\infty}\,\Big(\int_{M}|u|^{q}\, dv_{g}\Big)^{\frac{2}{q}},\label{eq3-5}
\end{equation}
where $V_{g}(M)$ is the volume of $M$.  
Moreover, since $|u|^q \leq 2^{q-1}\big(|u+h|^q+|h|^q\big)$, for every $u\in H^{1}_{0}(M)$ we obtain
\begin{equation}
    \int_{M}|u|^{q}\, dv_{g}\leq 2^{q-1}(\min_{M} f)^{-1}\gamma. \label{eq3-6}
\end{equation}
Inserting \eqref{eq3-6} into \eqref{eq3-5} gives
\begin{equation}
     \bigg|\int_M bu^{2}\, dv_{g}\bigg|
     \leq V_{g}(M)^{1-\frac{2}{q}}\,\|b\|_{\infty}\,
     \Big(2^{q-1}(\min_{M}f)^{-1}\gamma\Big)^{\frac{2}{q}}.\label{eq3-7}
\end{equation}
Thus,
\begin{equation}
    I(u)\geq \int_{M}a|\nabla u|^{2}\,dv_{g}
    -C(\gamma,q),\label{eq3-8}
\end{equation}
where $ C(\gamma,q)=V_{g}(M)^{1-\frac{2}{q}}\,\|b\|_{\infty}\,
     \Big(2^{q-1}(\min_{M}f)^{-1}\gamma\Big)^{\frac{2}{q}}\geq 0$,
which shows that $\mu_{\gamma,q}>-\infty$.  

Let $(w_{i})_{i}$ be a minimizing sequence for $I$ on $\mathcal{H}_{\gamma,q}$.  
From \eqref{eq3-7} and the definition of $I$, for $i$ large enough we have
\begin{equation}
   \int_{M}a|\nabla w_i|^{2}\,dv_{g}
   \leq \mu_{\gamma,q}+1+ C(\gamma,q).\label{eq3-9}
\end{equation}
It follows that
\begin{equation}
    \int_{M}|\nabla w_i|^{2}\,dv_{g}
    \leq (\min_{M}|a|)^{-1}\big(\mu_{\gamma,q}+1+C(\gamma,q)\big). \label{eq3-10}
\end{equation}
On the other hand, \eqref{eq3-7} gives a uniform bound for $\|w_i\|_{2}^{2}$.  
Hence
\begin{equation}
\|w_i\|_{H^{1}_{0}(M)}^{2}\leq
(\min_{M}|a|)^{-1}\big(\mu_{\gamma,q}+1+C(\gamma,q)(\|b\|_{\infty}+\min_{M}|a|)\big). \label{eq3-11}
\end{equation}
Therefore $(w_i)_i$ is bounded in $H^{1}_{0}(M)$. By reflexivity, there exists a subsequence (still denoted $(w_i)$) such that
\begin{equation*}
    \begin{array}{ll}
(a) & w_{i}\rightharpoonup w_{\gamma,q}\quad \text{weakly in } H_{0}^{1}(M),\\[0.3em]
(b) & w_{i}\to w_{\gamma,q}\quad \text{strongly in } L^{s}(M)\text{ for all } s<2^{\sharp},\\[0.3em]
(c) & \|w_{\gamma,q}\|_{H^{1}_{0}(M)}\leq \liminf\limits_{i}\|w_{i}\|_{H^{1}_{0}(M)}.
\end{array}
\end{equation*}
It follows that
\[
I(w_{\gamma,q}) \leq \liminf_{i} I(w_i) = \mu_{\gamma,q}.
\]
Moreover, by the dominated convergence theorem,
\[
\int_{M}f|w_{\gamma,q}+h|^{q}\,dv_{g}
=\lim_{i}\int_{M}f|w_i+h|^{q}\,dv_{g}=\gamma.
\]
Thus, $w_{\gamma,q}\in\mathcal{H}_{\gamma,q}$ and $I(w_{\gamma,q})=\mu_{\gamma,q}$. By a well-known theorem of Lagrange, this gives the existence of a multiplier $\lambda_{\gamma,q}$ such that, for all $\varphi\in H^{1}_{0}(M)$,
\[
\int_{M}(a\nabla w_{\gamma,q}\cdot\nabla\varphi+bw_{\gamma,q}\varphi)\,dv_{g}
=\lambda_{\gamma,q}\int_{M}f|w_{\gamma,q}+h|^{q-2}(w_{\gamma,q}+h)\varphi\,dv_{g}.
\]
Equivalently, $w_{\gamma,q}$ is a weak solution of
\begin{equation}
\left\{
\begin{array}{lll}
-\operatorname{div}_{g}(a\nabla w_{\gamma,q})+bw_{\gamma,q}
=\lambda_{\gamma,q}\, f|w_{\gamma,q}+h|^{q-2}(w_{\gamma,q}+h) & \text{in }&M,\\[0.4em]
w_{\gamma,q}=0 & \text{on }&\partial M.
\end{array}
\right.\label{eq3-12}
\end{equation}
Finally, by standard elliptic regularity theory (see Gilbarg–Trudinger~\cite{Gilbard-Trudinger1983}), one has $w_{\gamma,q}\in C^{2,\alpha}(M)$ for some $\alpha\in(0,1)$.
\end{proof}

%\section{Critical solutions}

\section{Convergence to a critical solution}

\label{sec:sec4} In this section, we examine the behaviour of the
minimizing sequence $\left( w_{\gamma,q}\right) _{q}$ as the exponent $q$ tends to $2^{\sharp}=\frac{2n}{n-2}$. We first claim that the following holds. 

\begin{proposition}
\label{proposition4-1} 
Assume that 
\begin{equation*}
\int_{M} f |h|^{2^{\sharp}} \, dv_{g} < \gamma.
\end{equation*}
Then, the following assertions hold:
\begin{itemize}
\item[(i)] The sequence of Lagrange multipliers $(\lambda_{\gamma,q})_{q}$ is strictly positive and bounded. 
\item[(ii)] The sequence $\left(w_{\gamma,q}\right)_{q}$ is bounded in $H_{0}^{1}(M)$ and converges weakly to a smooth solution $u \in C^{2,\alpha}(M)$ of the critical equation \eqref{eq3-2}.
\end{itemize}
\end{proposition}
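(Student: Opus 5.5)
Testing the Euler--Lagrange equation \eqref{eq3-12} with $\varphi=w_{\gamma,q}$ gives the identity that drives everything:
\begin{equation*}
\mu_{\gamma,q}=I(w_{\gamma,q})=\lambda_{\gamma,q}\int_M f|w_{\gamma,q}+h|^{q-2}(w_{\gamma,q}+h)\,w_{\gamma,q}\,dv_g
=\lambda_{\gamma,q}\Big(\gamma-\int_M f|w_{\gamma,q}+h|^{q-2}(w_{\gamma,q}+h)\,h\,dv_g\Big).
\end{equation*}
For (i) the plan is to bound the three quantities $\mu_{\gamma,q}$, $\gamma-\int_M f|w+h|^{q-2}(w+h)h$ and $\lambda_{\gamma,q}$ uniformly as $q\to2^\sharp$.

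\emph{Bound on $\mu_{\gamma,q}$.} Using the function $t_{\gamma,q}\psi_1$ from Lemma~\ref{lemma3-2}, we get $\mu_{\gamma,q}\le t_{\gamma,q}^2 I(\psi_1)$. The numbers $t_{\gamma,q}$ stay bounded as $q\to2^\sharp$ because $F_q\to F_{2^\sharp}$ locally uniformly and $F_{2^\sharp}(0)<\gamma$. Coercivity gives $\mu_{\gamma,q}\ge 0$, and in fact $\mu_{\gamma,q}>0$. Indeed, $w=0\notin\mathcal H_{\gamma,q}$, since for $q$ close to $2^\sharp$ we have $\int_M f|h|^q\,dv_g<\gamma$ by (ii) and continuity in $q$.

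\emph{Bound on the bracket.} By H\"older,
\begin{equation*}
\Big|\int_M f|w+h|^{q-1}|h|\,dv_g\Big|\le \gamma^{(q-1)/q}\Big(\int_M f|h|^q\,dv_g\Big)^{1/q}.
\end{equation*}
This is strictly less than $\gamma$ uniformly in $q$, because $\int_M f|h|^q<\gamma$ uniformly near $2^\sharp$ (this is exactly where the hypothesis is used). So the bracket lies in $[c,2\gamma]$ with $c>0$.

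\emph{Bound on $\lambda_{\gamma,q}$.} From the identity, $\lambda_{\gamma,q}=\mu_{\gamma,q}/(\text{bracket})$ is positive and bounded above. For a lower bound I would use Lemma~\ref{lemme2-1} and coercivity. These give
\begin{equation*}
\gamma\le C\|w+h\|_{2^\sharp}^q\le C'\big(\mu_{\gamma,q}^{1/2}+\|h\|\big)^q,
\end{equation*}
which keeps $\mu_{\gamma,q}$, and hence $\lambda_{\gamma,q}$, away from $0$ (if needed, I would use the constraint to rule out $w_{\gamma,q}\to0$). Boundedness of $(w_{\gamma,q})$ in $H_0^1(M)$ then follows from coercivity: $\Lambda\|w_{\gamma,q}\|^2\le\mu_{\gamma,q}\le C$.

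\emph{Passage to the limit.} Extract $q_k\to2^\sharp$ with $w_{q_k}\rightharpoonup u$ weakly in $H_0^1(M)$, strongly in $L^s$ for $s<2^\sharp$ and a.e., and $\lambda_{q_k}\to\lambda\ge0$. The sequence $|w_q+h|^{q-2}(w_q+h)$ is bounded in $L^{2^\sharp/(2^\sharp-1)}$ and converges a.e., so it converges weakly in that space. This lets us pass to the limit in the weak formulation, so $u$ is a weak solution of \eqref{eq3-2}.

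Regularity is the delicate point at the critical exponent. I would first show $u\in L^p$ for some $p>2^\sharp$ by a Trudinger/Brezis--Kato type argument: write the nonlinearity as $V(u+h)$ with $V=\lambda f|u+h|^{2^\sharp-2}\in L^{n/2}$, test with truncated powers of $|u|$, and use Lemma~\ref{lemme2-1} with a small $\varepsilon$ to absorb the critical term. Then a bootstrap with $L^p$ estimates and Schauder theory gives $u\in C^{2,\alpha}(M)$.

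\emph{Main obstacles.} The main obstacle is this critical regularity step, together with nontriviality of the limit; loss of compactness could make $u$ degenerate. Nontriviality is not claimed in this proposition, and I expect it to be handled later via the test functions and condition \eqref{eq1-5}.
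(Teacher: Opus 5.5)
Your proof is correct and has the same skeleton as the paper's. Both test \eqref{eq3-12} with $w_{\gamma,q}$, bound the bracket by H\"older using $\int_M f|h|^q\,dv_g<\gamma$ for $q$ near $2^\sharp$, bound $\mu_{\gamma,q}$ above by $I(t_{\gamma,q}\psi_1)$, get the $H_0^1$ bound from coercivity, and pass to a weak limit.

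The differences are in the sub-steps.

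\emph{The bound on $t_{\gamma,q}$.} The paper shows $t\,\partial_tF(t,q)=q\bigl(\gamma-\int_M f|t\psi_1+h|^{q-2}(t\psi_1+h)h\,dv_g\bigr)>0$ at a root and invokes the implicit function theorem. This also yields continuous dependence of $t_{\gamma,q}$ on $q$ (and, since the sign argument applies at every root, uniqueness). Your route is more elementary. What it really uses is a fixed $T$ with $F_{2^\sharp}(T)>\gamma$, hence $F_q(T)>\gamma$ for $q$ near $2^\sharp$, together with a root chosen in $(0,T)$. The condition $F_{2^\sharp}(0)<\gamma$ only controls the other end.

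\emph{Positivity of $\lambda_{\gamma,q}$.} The paper only shows each $\lambda_{\gamma,q}>0$: if $\lambda_{\gamma,q}=0$ then $\mu_{\gamma,q}=0$, so $w_{\gamma,q}=0\in\mathcal H_{\gamma,q}$, a contradiction. Hence the limit $\lambda$ could a priori vanish. Your uniform lower bound is a real gain, since it gives the $\lambda>0$ asserted in Theorem~\ref{Theorem1-1}.

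Your displayed chain with unweighted norms does not by itself give that bound, because it would need $C'\|h\|^q<\gamma$. Your fallback does work. If $\mu_{\gamma,q_k}\to0$, then $w_{q_k}\to0$ in $L^{2^\sharp}$. Minkowski's inequality in $L^{q_k}(f\,dv_g)$ then gives $\gamma^{1/2^\sharp}\le\bigl(\int_M f|h|^{2^\sharp}\,dv_g\bigr)^{1/2^\sharp}$ in the limit, contradicting the hypothesis.

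\emph{Limit passage and regularity.} The paper compresses the limit passage into one sentence and cites Gilbarg--Trudinger for regularity. Your argument fills in what that leaves out. The weak $L^{2^\sharp/(2^\sharp-1)}$ convergence of the nonlinearity justifies passing to the limit. The Brezis--Kato step is genuinely needed, because $|u+h|^{2^\sharp-1}$ is a priori only in $L^{2n/(n+2)}$, where the plain $L^p$ bootstrap gains nothing.
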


\begin{proof}
We recall that the condition 
\[\int_{M}f\left\vert h\right\vert
^{2^{\sharp}}\,dv_{g}<\gamma,\]
is valid for $q$ sufficiently close to $2^{\sharp}$.
To prove (i), we multiply the equation $\eqref{eq3-12}$ by $%
w_{\gamma,q}$ and we integrate over $M$ to get 

\begin{align}
0\leq\mu_{\gamma,q} & =\int_{M}\big(a|\nabla w_{\gamma,q}|^{2}+bw_{\gamma,q}^{2}\big) \,dv_{g} 
\notag \\[0.5em]
& =\lambda_{\gamma,q}\int_{M}f\left\vert w_{\gamma,q}+h\right\vert
^{q-2}\left( w_{\gamma,q}+h\right) w_{\gamma,q}\,dv_{g}\notag \\[0.5em]
& =\lambda_{\gamma,q}\int_{M}f\left\vert w_{\gamma,q}+h\right\vert
^{q-2}\left( w_{\gamma,q}+h\right) \left( w_{\gamma,q}+h-h\right) \,dv_{g} 
\notag \\[0.5em]
& =\lambda_{\gamma,q}\left( \gamma-\int_{M}f\left\vert w_{\gamma
,q}+h\right\vert ^{q-2}\left( w_{\gamma,q}+h\right) h \,dv_{g}\right).
\label{eq4-1}
\end{align}
From the assumption $\int_{M} f\, |h|^{2^{\sharp}} \, dv_{g} < \gamma ,$ and by H\"older's inequality, we obtain
\begin{equation}
\int_{M}f\left\vert w_{\gamma,q}+h\right\vert ^{q-2}\left(
w_{\gamma,q}+h\right) h\,dv_{g}\leq\gamma^{1-\frac{1}{q}}\left( \int
\nolimits_{M}f\left\vert h\right\vert ^{q}\right) ^{\frac{1}{q}}<\gamma. \label{eq4-2}
\end{equation}
It follows from \eqref{eq4-1} and \eqref{eq4-2} that $\lambda_{\gamma,q}\geq0$. Moreover, if $\lambda_{\gamma,q}=0$, we get $w_{\gamma,q}=0\in \mathcal{H}_{\gamma,q}$, that is,
\begin{equation*}
\int_{M} f |h|^q \, dv_{g} = \gamma,
\end{equation*}
which contradicts the assumption 
\begin{equation}
\int_{M} f |h|^{q} \, dv_{g} < \gamma ,\label{eq4-3}
\end{equation}
and therefore $\lambda_{\gamma,q} \neq 0$. In order to prove that the sequence $\left(\lambda_{\gamma,q}\right)_{q}$ is
bounded, we denote by $\psi_{1}$ the eigenfunction
corresponding to the first eigenvalue $\lambda_{1}$ of the operator 
$-\operatorname{div}_{g}(a\nabla)$. It satisfies
\begin{equation*}
\left\{ 
\begin{array}{lll}
-\operatorname{div}_{g}(a\nabla \psi_{1})=\lambda_{1}\psi_{1}, & \text{in }& M, \\[0.5em] 
\psi_{1}=0, & \text{on } &\partial M, \\[0.5em] 
\displaystyle \int_{M}\psi_{1}^{2}\, dv_{g} = 1. & &
\end{array}
\right. 
\end{equation*}
We set 
\begin{equation*}
F(t,q)=\int_{M} f |t\psi_{1}+h|^{q}\, dv_{g}.
\end{equation*}
According to Lemma~\ref{lemma3-2}, there exists $t_{\gamma,q}>0$ such that 
$t_{\gamma,q}\psi_{1}\in \mathcal{H}_{\gamma,q}$. That is 
\begin{equation*}
F(t_{\gamma,q},q)=\int_{M} f |t_{\gamma,q}\psi_{1}+h|^{q}\, dv_{g}=\gamma.
\end{equation*}
To prove that 
\[
\frac{\partial F}{\partial t}\left( t_{\gamma,q},q \right) \neq 0,
\]
we argue by contradiction. Assume that 
\[
\frac{\partial F}{\partial t}\left( t_{\gamma,q},q \right) = 0.
\] 
We can write 
\begin{align*}
t_{\gamma,q}\,\frac{\partial F}{\partial t}\left( t_{\gamma,q},q\right) 
&= q t_{\gamma,q} \int_{M} f \left| t_{\gamma,q}\psi_{1}+h \right|^{q-2}
       \left( t_{\gamma,q}\psi_{1}+h \right)\psi_{1}\, dv_{g} \\[0.5em]
&= q \int_{M} f \left| t_{\gamma,q}\psi_{1}+h \right|^{q-2}
       \left( t_{\gamma,q}\psi_{1}+h \right)
       \left( t_{\gamma,q}\psi_{1}+h - h \right)\, dv_{g} \\[0.5em]
&= q \left( \gamma - \int_{M} f \left| t_{\gamma,q}\psi_{1}+h \right|^{q-2}
       \left( t_{\gamma,q}\psi_{1}+h \right) h \, dv_{g} \right) \\[0.5em]
&= 0.
\end{align*}
Hence 
\begin{equation}
\gamma=\int_{M}f\left\vert t_{\gamma,q}\psi_{1}+h\right\vert ^{q-2}\left(
t_{\gamma,q}\psi_{1}+h\right) h\,dv_{g}\text{.}  \label{eq4-4}
\end{equation}
On the other hand, from H\"older's inequality and the assumption that 
\begin{equation*}
\int_{M} f\left\vert h\right\vert ^{q}\,dv_{g}<\gamma,
\end{equation*}
we deduce that 
\begin{align*}
\int_{M}f\left\vert t_{\gamma,q}\psi_{1}+h \right\vert  ^{q-2} \left(
t_{\gamma,q}\psi_{1} +h\right) h\,dv_{g}
&\leq \left( \int_{M}f\left\vert t_{\gamma,q}\psi_{1}+h\right\vert
^{q}\,dv_{g}\right) ^{1-\frac{1}{q}} \left( \int_{M}f\left\vert h\right\vert
^{q}\,dv_{g}\right) ^{\frac{1}{q}}\\[0.5em]
&< \gamma,
\end{align*}
which contradicts the relation \eqref{eq4-4}. Therefore
\[
\frac{\partial F}{\partial t}\left( t_{\gamma,q},q\right) \neq 0.
\]
By the implicit function theorem, it follows that the function $t_{\gamma,q}$ 
depends continuously on $q$. Consequently, there exists a constant 
$C(\gamma)>0$, independent of $q$, such that 
\begin{align}
\int_{M}\bigl(a|\nabla w_{\gamma,q}|^2 + b w_{\gamma,q}^2\bigr)\, dv_{g}
   &\leq I\bigl(t_{\gamma,q}\psi_{1}\bigr)  \notag \\[0.5em]
   &= t_{\gamma,q}^{2}\, I(\psi_{1}) \notag \\[0.5em]
   &\leq C(\gamma)\big(\lambda_1+\Vert b\Vert_\infty\big).
\label{eq4-5}
\end{align}
According to \eqref{eq4-2} and the estimate above, 
we obtain 
\begin{align*}
0 < \lambda_{\gamma,q} 
&= \frac{\displaystyle \int_{M}\bigl(a|\nabla w_{\gamma,q}|^2 + b w_{\gamma,q}^2\bigr)\, dv_{g}}
        {\displaystyle \int_{M} f \left| w_{\gamma,q}+h \right|^{q-2} 
        \left( w_{\gamma,q}+h \right) w_{\gamma,q}\, dv_{g}} \\[1em]
&= \frac{\displaystyle \int_{M}\bigl(a|\nabla w_{\gamma,q}|^2 + b w_{\gamma,q}^2\bigr)\, dv_{g}}
        {\gamma - \displaystyle \int_{M} f \left| w_{\gamma,q}+h \right|^{q-2}
        \left( w_{\gamma,q}+h \right) h \, dv_{g}} \\[1em]
&\leq \frac{I\!\left( t_{\gamma,q}\psi_{1}\right)}
        {\gamma - \gamma^{1-\frac{1}{q}}
        \left( \int_{M} f |h|^{q}\, dv_{g} \right)^{\frac{1}{q}}} \\[1em]
&\leq \frac{C(\gamma)\big(\lambda_1+\Vert b\Vert_\infty\big)}{\gamma - \gamma^{1-\frac{1}{q}}
        \left( \int_{M} f |h|^{q}\, dv_{g} \right)^{\frac{1}{q}}} \\[1em]
&\leq C'(\gamma,h).
\end{align*}
Hence, the sequence $\bigl(\lambda_{\gamma,q}\bigr)_{q}$ remains bounded 
as $q\to 2^{\sharp}$. Therefore, there exists a subsequence, still denoted 
$\big(\lambda_{\gamma,q}\big)_{q}$, such that $\lim_{q\to2^{\sharp}}\lambda_{\gamma,q}= \lambda$. Let us now prove the second statement of the proposition. 
By the estimate \eqref{eq4-5} and the coercivity of the operator 
$-\operatorname{div}_{g}(a\nabla) + b$, 
we deduce that the sequence $\{w_{\gamma,q}\}_{q}$ is bounded in the reflexive space $H^{2}_{3,0}(M)$. 
Therefore, there exists $w \in H^{1}_{0}(M)$ such that, up to a subsequence, 
\begin{align*}
(a)\;& w_{\gamma,q} \rightharpoonup w 
&& \text{weakly in } H^{1}_{0}(M), \\[0.5em]
(b)\;& w_{\gamma,q} \to w 
&& \text{strongly in } L^{s}(M) 
\;\; \text{for all } s<2^{\sharp}, \\[0.5em]
(c)\;& w_{\gamma,q} \to w 
&& \text{a.e.\ in } M. 
\end{align*}
Passing to the limit in the equation in \eqref{eq3-12} as $q\to 2^{\sharp}$, we deduce that $w$ is a weak
solution for the critical equation \eqref{eq3-2}. Consequently, setting $u := w+h$, 
we obtain that $u$ is a weak solution of equation \eqref{eq1-4}. 
By regularity theory from Gilbarg-Trudinger~\cite{Gilbard-Trudinger1983}, 
it follows that $u \in C^{2,\alpha}(M)$ for any $0<\alpha<1$. 
This completes the proof.
\end{proof}
\subsection{Condition for a nontrivial solution}
\begin{remark}
\label{remark3-1} 
It is clear that if the boundary data $\phi\not \equiv 0$, then
the solution of \eqref{eq1-4} is non-identically zero (i.e., $u\not \equiv 0$). 
Otherwise, if $\phi=0$, then $h=0$, so that $u=w$. In this case, under an additional
assumption, we prove that $w$ is a nontrivial solution of 
\begin{equation}
\left\{ 
\begin{array}{lll}
-\operatorname{div}_{g}\!\left(a\nabla w\right)+bw=\lambda f\vert w\vert ^{2^{\sharp}-2}w & \text{in }&M,
\\[0.5em] 
w=0 & \text{on }&\partial M.
\end{array}
\right.  \label{eq4-6}
\end{equation}
\end{remark}
We want to formulate the condition that ensure the
existence of a nontrivial solution to the problem \eqref{eq4-6}. For any $u\in H^{1}_{0}(M)\setminus \{0\}$, we define  
\[
I(u)=\int_{M}\left(a|\nabla u|^2+bu^2\right)\,dv_{g}.
\]
The associated Rayleigh quotient is given by  
\begin{equation*}
Q(u)=\frac{I(u)}{\left( \displaystyle\int_{M}f|u|^{2^{\sharp}}\,dv_{g}\right)^{\frac{2}{2^{\sharp}}}}.
\end{equation*}
The corresponding minimization problem is  
\begin{equation}
\mu = \inf_{u\in H^{1}_{0}(M)\setminus \{0\}} Q(u).
\label{eq4-7}
\end{equation}
Since both terms in $Q$ are one-homogeneous, $Q$ is invariant under
multiplication of $u$ by a non-zero constant. Thus, the minimization problem \eqref{eq4-7} can be reformulated as  
\begin{equation}
\mu = \inf_{u\in \mathcal{H}_{\gamma}} I(u),
\label{eq4-8}
\end{equation}
where  
\[
\mathcal{H}_{\gamma}=\left\{ u\in H^{1}_{0}(M)\;:\;\int_{M}f|u|^{2^{\sharp}}\,dv_{g}=\gamma \right\},
\]
and $\gamma>0$ is chosen as in \eqref{condition3-4}. We have the following lemma.

\begin{lemma}
The subcritical minimum $\mu_{\gamma ,q}$ converges 
to the critical minimum $\mu$, given by \eqref{eq4-8}, as $q$ tends to $2^{\sharp}$.
\end{lemma}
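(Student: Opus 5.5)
Throughout I work in the setting of Remark~\ref{remark3-1}, that is $\phi=0$ and hence $h=0$. Then $\mathcal{H}_{\gamma,q}=\{w\in H^{1}_{0}(M):\int_{M}f|w|^{q}\,dv_{g}=\gamma\}$ and $\mathcal{H}_{\gamma}$ is its critical analogue. The plan is to prove separately
\[
\limsup_{q\to2^{\sharp}}\mu_{\gamma,q}\le\mu
\qquad\text{and}\qquad
\mu\le\liminf_{q\to2^{\sharp}}\mu_{\gamma,q}.
\]
Both inequalities come from rescaling an admissible function for one constraint into an admissible function for the other. This works because $I$ is $2$-homogeneous, and $I\ge 0$ by coercivity.

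\emph{Upper bound.} Fix $u\in\mathcal{H}_{\gamma}$. Since $u\in L^{2^{\sharp}}(M)$ and $|u|^{q}\le 1+|u|^{2^{\sharp}}$, dominated convergence gives
\[
\int_{M}f|u|^{q}\,dv_{g}\to\int_{M}f|u|^{2^{\sharp}}\,dv_{g}=\gamma .
\]
Define $t_{q}>0$ by $t_{q}^{q}\int_{M}f|u|^{q}\,dv_{g}=\gamma$. Then $t_{q}\to1$ and $t_{q}u\in\mathcal{H}_{\gamma,q}$, so
\[
\mu_{\gamma,q}\le I(t_{q}u)=t_{q}^{2}I(u).
\]
Letting $q\to2^{\sharp}$ gives $\limsup_{q}\mu_{\gamma,q}\le I(u)$. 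Taking the infimum over $u\in\mathcal{H}_{\gamma}$ gives the upper bound. In particular $(\mu_{\gamma,q})_{q}$ is bounded, which is also visible from \eqref{eq4-5}.

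\emph{Lower bound.} Let $w_{q}:=w_{\gamma,q}$ be the minimizer from Proposition~\ref{proposition3-1}. Hölder's inequality with respect to the measure $f\,dv_{g}$ gives
\[
\gamma=\int_{M}f|w_{q}|^{q}\,dv_{g}\le\Big(\int_{M}f|w_{q}|^{2^{\sharp}}\,dv_{g}\Big)^{q/2^{\sharp}}\Big(\int_{M}f\,dv_{g}\Big)^{1-q/2^{\sharp}}.
\]
This is an explicit lower bound for $\int_{M}f|w_{q}|^{2^{\sharp}}\,dv_{g}$; in particular it is positive. Define $s_{q}>0$ by $s_{q}^{2^{\sharp}}\int_{M}f|w_{q}|^{2^{\sharp}}\,dv_{g}=\gamma$, so that $s_{q}w_{q}\in\mathcal{H}_{\gamma}$. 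The Hölder bound yields
\[
s_{q}^{2^{\sharp}}\le c_{q}:=\gamma^{1-2^{\sharp}/q}\Big(\int_{M}f\,dv_{g}\Big)^{2^{\sharp}/q-1}\longrightarrow1 .
\]
Hence
\[
\mu\le I(s_{q}w_{q})=s_{q}^{2}\mu_{\gamma,q}\le c_{q}^{2/2^{\sharp}}\mu_{\gamma,q},
\]
where the last step uses $\mu_{\gamma,q}\ge0$. Passing to the $\liminf$ gives $\mu\le\liminf_{q}\mu_{\gamma,q}$. Combined with the upper bound, this proves $\mu_{\gamma,q}\to\mu$.

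\emph{Main obstacle.} I expect the delicate point to be the lower bound. A naive approach would pass to the limit in $w_{q}$ itself, which fails because $H^{1}_{0}\hookrightarrow L^{2^{\sharp}}$ is not compact and mass can concentrate. The rescaling argument above avoids any compactness: it needs only the Hölder comparison between the $L^{q}$ and $L^{2^{\sharp}}$ constraints with constants tending to $1$, together with $\mu_{\gamma,q}\ge0$. The upper bound only requires dominated convergence for a fixed function.

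If one insists on a general $h\neq0$, the same scheme should go through with $w+h$ in place of $u$. However, the rescaling then no longer commutes with the shift by $h$, and I would restrict the statement to the case $h=0$ treated in Remark~\ref{remark3-1}.
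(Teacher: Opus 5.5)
Your proof is correct, but it follows a different route from the paper. The paper works through the Lagrange multipliers. It writes $\mu_{\gamma,q}=\lambda_{\gamma,q}\left(\gamma-\int_M f|w_{\gamma,q}+h|^{q-2}(w_{\gamma,q}+h)h\,dv_g\right)$ via \eqref{eq4-1}, lets $q\to2^{\sharp}$ using $\lambda_{\gamma,q}\to\lambda$ (along a subsequence, from Proposition~\ref{proposition4-1}), and then identifies the limit with $\mu$. It does this by asserting that the weak limit $w$ minimizes $I$ on the critical constraint set and testing the limiting equation against $w$. That identification needs $\int_M f|w+h|^{2^{\sharp}}dv_g=\gamma$ and $I(w)=\mu$. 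Weak convergence in $H^1_0(M)$ does not give either, precisely because of the concentration phenomenon you mention; lower semicontinuity only yields $I(w)\le\liminf_q\mu_{\gamma,q}$. For $h=0$ it would even make Proposition~\ref{proposition3-4-2} automatic.

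Your argument avoids passing to the limit in $w_{\gamma,q}$ or $\lambda_{\gamma,q}$ altogether and compares the two infima directly. It uses only dominated convergence for a fixed competitor, H\"older with constants $c_q\to1$, the $2$-homogeneity of $I$, and $I\ge0$. As a result it gives convergence of the whole family, not just of a subsequence. It also does not need the subcritical minimizers, since any $w\in\mathcal{H}_{\gamma,q}$ gives $\mu\le c_q^{2/2^{\sharp}}I(w)$. What the paper's route would add, were its identification step justified, is an explicit formula for the limit in terms of $\lambda$ and $w$.

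Your restriction to $h=0$ is consistent with \eqref{eq4-8}, whose constraint contains no $h$, and it is not essential anyway. Since $I$ does not involve $h$, $I(tw)=t^2I(w)$ still holds, and only the shifted constraint must be matched. When $\int_M f|w+h|^{2^{\sharp}}dv_g=\gamma$, the derivative of $t\mapsto\int_M f|tw+h|^{2^{\sharp}}dv_g$ at $t=1$ equals $2^{\sharp}\left(\gamma-\int_M f|w+h|^{2^{\sharp}-2}(w+h)h\,dv_g\right)$, which is positive by H\"older and \eqref{condition3-4}. The intermediate value theorem (plus convexity in $s$ for the lower bound) then gives rescaling factors $t_q\to1$ and $s_q\le1+o(1)$.
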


\begin{proof}
The result follows essentially from the fact that the sequence 
$(\lambda_{\gamma ,q})_{q}$ of Lagrange multipliers associated with the subcritical equations converges to $\lambda$, the Lagrange multiplier of the 
critical equation. Indeed, by formula \eqref{eq4-1}, we have 
\begin{equation*}
\mu_{\gamma ,q}
 = \lambda_{\gamma ,q}\left( \gamma - \int_{M} f \vert w_{\gamma ,q}+h\vert^{\,q-2} (w_{\gamma ,q}+h) h \, dv_{g}\right).
\end{equation*}
Since 
\[
\lim_{q\to 2^{\sharp}} \lambda_{\gamma ,q} = \lambda,
\]
and \[\lim_{q\to 2^{\sharp}} \int_{M} f \vert w_{\gamma ,q}+h\vert^{\,q-2} (w_{\gamma ,q}+h) h \, dv_{g} 
= \int_{M} f \vert w+h\vert^{2^{\sharp}-2} (w+h) h \, dv_{g},\]
we infer that
\begin{equation*}
\lim_{q\to 2^{\sharp}} \mu_{\gamma ,q} 
= \lambda \left( \gamma - \int_{M} f \vert w+h\vert^{2^{\sharp}-2} (w+h) h \, dv_{g}\right).
\end{equation*}
Moreover, from the proof of Proposition~\ref{proposition4-1}, it has been shown that 
$\mu$ is the minimum of the functional 
\[
I(w) = \int_{M}\left(a\vert\nabla w\vert^2 + b w^2\right)\, dv_{g}
\quad \text{on } \mathcal{H}_{\gamma}.
\]
Therefore,
\begin{eqnarray*}
\mu  
&=& \int_{M}\left(a\vert\nabla w\vert^2 + b w^2\right)\, dv_{g} \\
&=& \lambda \int_{M} f \vert w+h\vert^{2^{\sharp}-2} (w+h) w \, dv_{g} \\
&=& \lambda \left( \gamma - \int_{M} f \vert w+h\vert^{2^{\sharp}-2} (w+h) h \, dv_{g}\right) \\
&=& \lim_{q\to 2^{\sharp}} \mu_{\gamma ,q}.
\end{eqnarray*}
\end{proof}

\begin{proposition}
\label{proposition3-4-2} 
Let $w$ be the weak limit of the minimizing sequence $\left(w_{\gamma ,q}\right)_{q}$ in $H^{1}_{0}\left(M\right)$, and assume that 
\begin{equation}
K_{0}\, \big(\min_{M}a\big)^{-1}\, \big(\displaystyle\max_{M} f\big)^{\tfrac{2}{2^{\sharp}}}\,\gamma^{-\tfrac{2}{2^{\sharp}}} \,\mu < 1,
\label{eq4-9}
\end{equation}
where $\mu=\lim_{q\rightarrow 2^{\sharp}}\mu_{\gamma,q}$ and $K_{0}$ denotes the best Sobolev constant.  
Then $w$ is a nontrivial solution of the problem \eqref{eq4-6}.
\end{proposition}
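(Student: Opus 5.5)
We argue by contradiction and suppose $w\equiv 0$; here $\phi=0$, so $h=0$. The plan is to use the constraint $\int_M f|w_{\gamma,q}|^{q}\,dv_g=\gamma$ together with the sharp Sobolev inequality of Lemma~\ref{lemme2-1}. This gives a lower bound on $\mu_{\gamma,q}$ that is incompatible with \eqref{eq4-9} in the limit $q\to 2^{\sharp}$. The weak-limit statement (that $w$ solves \eqref{eq4-6}) is already provided by Proposition~\ref{proposition4-1}, so only non-triviality needs proof.

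\textbf{Step 1 (lower-order terms vanish).} If $w\equiv 0$, the strong convergence $w_{\gamma,q}\to w$ in $L^{s}(M)$ for $s<2^{\sharp}$ from Proposition~\ref{proposition4-1} gives $\|w_{\gamma,q}\|_{2}\to 0$. Hence $\int_M b\,w_{\gamma,q}^{2}\,dv_g\to 0$ and
\[
\int_M a|\nabla w_{\gamma,q}|^{2}\,dv_g=\mu_{\gamma,q}+o(1)\longrightarrow \mu ,
\]
by the preceding lemma on convergence of $\mu_{\gamma,q}$.

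\textbf{Step 2 (Sobolev lower bound).} By H\"older's inequality,
\[
\gamma=\int_M f|w_{\gamma,q}|^{q}\,dv_g\le \big(\max_M f\big)\,V_g(M)^{1-\frac{q}{2^{\sharp}}}\,\|w_{\gamma,q}\|_{2^{\sharp}}^{q}.
\]
Lemma~\ref{lemme2-1} then gives
\[
\|w_{\gamma,q}\|_{2^{\sharp}}^{2}\le (K_0+\varepsilon)\big(\min_M a\big)^{-1}\int_M a|\nabla w_{\gamma,q}|^{2}\,dv_g+B_\varepsilon\|w_{\gamma,q}\|_{2}^{2}.
\]
Combine these, let $q\to 2^{\sharp}$ (so that $V_g(M)^{1-q/2^{\sharp}}\to 1$), and use Step 1. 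This yields
\[
\gamma^{\frac{2}{2^{\sharp}}}\le \big(\max_M f\big)^{\frac{2}{2^{\sharp}}}(K_0+\varepsilon)\big(\min_M a\big)^{-1}\mu .
\]
Since $\varepsilon>0$ is arbitrary, we get $1\le K_0(\min_M a)^{-1}(\max_M f)^{2/2^{\sharp}}\gamma^{-2/2^{\sharp}}\mu$, which contradicts \eqref{eq4-9}. Therefore $w\not\equiv 0$.

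\textbf{Main obstacle.} The delicate point is the passage to the limit in the exponents. We need $\|w_{\gamma,q}\|_{2^{\sharp}}$ to remain bounded, which follows from the $H^1_0$ bound of Proposition~\ref{proposition4-1} and the continuous embedding. Boundedness lets us replace $\|w_{\gamma,q}\|_{2^{\sharp}}^{q}$ by $\|w_{\gamma,q}\|_{2^{\sharp}}^{2^{\sharp}}$ up to $o(1)$ along a subsequence on which $\|w_{\gamma,q}\|_{2^{\sharp}}$ converges. We also need this limit to be positive, which follows from the inequality $\gamma\le C\,\|w_{\gamma,q}\|_{2^{\sharp}}^{q}$ in Step 2, since $\gamma>0$. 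Once these points are in place, raising to the power $2/2^{\sharp}$ is legitimate and the estimate above closes.
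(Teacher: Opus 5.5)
Your proof is correct and is essentially the paper's argument. You assume $w\equiv 0$, use compactness to get $\|w_{\gamma,q}\|_2\to 0$, combine H\"older on the constraint with the $\varepsilon$-Sobolev inequality of Lemma~\ref{lemme2-1} and the identity $I(w_{\gamma,q})=\mu_{\gamma,q}$, then let $q\to 2^{\sharp}$ and $\varepsilon\to 0$ to contradict \eqref{eq4-9}. The only difference is cosmetic: the paper raises the H\"older bound to the power $2/q$ at once, $\gamma^{2/q}\le (\max_M f)^{2/q}V_g(M)^{\frac{2}{q}-\frac{2}{2^{\sharp}}}\|w_{\gamma,q}\|_{2^{\sharp}}^{2}$, which makes your subsequence argument on $\|w_{\gamma,q}\|_{2^{\sharp}}$ unnecessary.
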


\begin{proof}
We argue by contradiction. Assume that $w\equiv 0$, then, $\left(w_{\gamma,q}\right)_{q}$ converges weakly to $0$ in $H^{1}_{0}(M)$. Since $H^{1}_{0}(M)$ is compactly embedded in $L^{2}(M)$, it follows that $\left(w_{\gamma,q}\right)_{q}$ converges strongly to $0$ in $L^{2}(M)$ as $q\to 2^{\sharp}$. We have
\begin{equation*}
\gamma^{\frac{2}{q}}
= \left(\int_{M} f|w_{\gamma,q}|^{q}\,dv_{g}\right)^{\frac{2}{q}}
\leq \big(\max_{M}f\big)^{\frac{2}{q}}
V_{g}(M)^{\frac{2}{q}-\frac{2}{2^{\sharp}}}
\left(\int_{M}|w_{\gamma,q}|^{2^{\sharp}}\,dv_{g}\right)^{\frac{2}{2^{\sharp}}},
\end{equation*}
where $V_{g}(M)$ denotes the volume of $M$. By applying the Sobolev inequality \eqref{eq2-3} (see Lemma \ref{lemme2-1}), we deduce that for every $\varepsilon>0$, there exists $B_{\varepsilon}>0$ such that
\begin{equation}
\gamma^{\frac{2}{q}}\leq 
\big(\max_{M}f\big)^{\frac{2}{q}}
V_{g}(M)^{\frac{2}{q}-\frac{2}{2^{\sharp}}}
\left((K_{0}+\varepsilon)\|\nabla w_{\gamma,q}\|_{2}^{2}
+ B_{\varepsilon}\|w_{\gamma,q}\|_{2}^{2}\right).
\label{eq4-10}
\end{equation}
Since $I(w_{\gamma,q})=\mu_{\gamma,q}$, we have
\begin{equation}
\int_{M}a|\nabla w_{\gamma,q}|^{2}\,dv_{g}
= \mu_{\gamma,q}-\int_{M}b w_{\gamma,q}^{2}\,dv_{g}.
\label{eq4-11}
\end{equation}
Hence
\begin{align}
\int_{M}|\nabla w_{\gamma,q}|^{2}\,dv_{g}
&\leq (\min_{M}a)^{-1}\int_{M}a|\nabla w_{\gamma,q}|^{2}\,dv_{g} \notag\\[0.5em]
&\leq (\min_{M}a)^{-1}\left(\mu_{\gamma,q}-\int_{M}b w_{\gamma,q}^{2}\,dv_{g}\right) \notag\\[0.5em]
&\leq (\min_{M}a)^{-1}\Big(\mu_{\gamma,q}+\|b\|_{\infty}\|w_{\gamma,q}\|_{2}^{2}\Big).
\label{eq4-12}
\end{align}
Combining \eqref{eq4-10} and \eqref{eq4-12}, we obtain
\begin{align}
\gamma^{\frac{2}{q}}
&\leq \big(\max_{M}f\big)^{\frac{2}{q}}
V_{g}(M)^{\frac{2}{q}-\frac{2}{2^{\sharp}}}
\Big\{ (K_{0}+\varepsilon)(\min_{M}a)^{-1}\big(\mu_{\gamma,q}+\|b\|_{\infty}\|w_{\gamma,q}\|_{2}^{2}\big) \notag\\
&\quad + B_{\varepsilon}\|w_{\gamma,q}\|_{2}^{2}\Big\}.
\label{eq4-13}
\end{align}
Passing to the limit as $q\to 2^{\sharp}$, and noting that 
\[
\|w_{\gamma,q}\|_{2}^{2}=o(1) 
\qquad \text{and} \qquad 
V_{g}(M)^{\frac{2}{q}-\frac{2}{2^{\sharp}}}=1+o(1),
\]
we conclude that
\begin{equation*}
K_{0}\, \big(\min_{M}a\big)^{-1}\, \big(\displaystyle\max_{M} f\big)^{\tfrac{2}{2^{\sharp}}}\,\gamma^{-\tfrac{2}{2^{\sharp}}} \,\mu\geq 1,
\end{equation*}
which contradicts the assumption \eqref{eq4-9}. Therefore, $w\not\equiv 0$. 
\end{proof}
\section{Test functions and geometric conditions}\label{sec:sec5}

This section is devoted to evaluate the condition \eqref{eq4-9} by means of test functions. For this purpose, we consider a system of normal
coordinate $\left( x^{1},\, x^{2},\,\ldots,\,x^{n}\right) $ centred
at a point $x_{0}$ where the function $f$ is maximal (i.e. $f(
x_{0}) =\max_{M}f$). Let $B\left( x_{0},\delta\right) $ be the geodesic ball of radius $\delta$
centred at $x_{0}$ such that the injectivity radius $i_{g}(M) $ of the
metric g at the point $x_{0}$ is greater than $2\delta$. Let $\eta$ be a smooth cut-off function defined by 
\begin{equation*}
\eta(x) =\left\{ 
\begin{array}{lll}
1 & \text{if }&x\in B\left( x_{0},\delta\right), \\[0.5em] 
0 & \text{if }&x\in M-B\left( x_{0},2\delta\right).%
\end{array}
\right. 
\end{equation*}
For $\varepsilon>0$, we define the radial function $u_{\varepsilon}\in
C^{\infty}(M) $ by 
\begin{equation*}
u_{\varepsilon}(x)=\eta(x)\,v_{\varepsilon}(x), 
\end{equation*}
where 
\begin{equation*}
v_{\varepsilon}(x)=\left(\frac{2\varepsilon}{\varepsilon^{2}+r^{2}}\right)^{\frac{n-2}{2}},
\end{equation*}
where $r=d_{g}(x,x_{0})$. Our aim is to estimate 
\begin{equation}
    \mu_{\varepsilon} =\int_{M}\left(a|\nabla u_{\varepsilon }|^{2}+bu_{\varepsilon }^{2}\right)\, dv_{g} , \label{eq5-1}
\end{equation}
and
\begin{equation}
 \\
\gamma_{\varepsilon} =\int_{M}f\left\vert u_{\varepsilon }\right\vert
^{2^{\sharp}}\,dv_{g},  \label{eq5-2}
\end{equation}
by the radial functions $u_{\varepsilon}$, which will be used to prove that
the quotient 
\begin{equation}
Q_{\varepsilon}=K_{0}\, \big(a(x_0))^{-1}\,\big(f(x_0)\big)^{\frac{2}{2^{\sharp}}
}\,\gamma_{\varepsilon}^{-\frac{2}{2^{\sharp}}}\,\mu_{\varepsilon},
\label{eq5-3}
\end{equation}
is strictly less than $1$, when $\varepsilon$ is small enough. We recall the following elementary identities (see \cite{Aubin1976}), which will be used later in the computations.
\begin{lemma}\cite{Aubin1976}\label{lemma5-1}
Let $p,q$ be two positive real numbers. 
For $p-q>1$, set
\[
I_{p}^{q}=\int_{0}^{\infty}\frac{t^{q}}{(1+t)^{p}}\,dt.
\]
Then, the following hold:
\begin{enumerate}
\item $I_{p+1}^{q}=\frac{p-q-1}{p}I_{p}^{q}$,  \quad $I_{p+1}^{q+1}=\frac{q+1}{p-q-1}I_{p+1}^{q}$.
\item If $\delta \in \mathbb{R}^{+}$, the limit 
\[
\lim_{\varepsilon \to 0^{+}} \left( \int_{0}^{\delta}\frac{t^{q}}{(t+\varepsilon)^{p}}\,dt
-\frac{I^{q}_{p}}{\varepsilon^{p-q-1}} \right)
\]
is finite whenever $p-q-1>0$.

\item \label{3} If $\delta \in \mathbb{R}^{+}$, the limit
\[
\lim_{\varepsilon \to 0^{+}} \left( \int_{0}^{\delta}\frac{t^{q}}{(t+\varepsilon)^{p}}\,dt
- \log\!\frac{1}{\varepsilon}\right)
\]
is finite whenever $p-q-1=0$.
\end{enumerate}
\end{lemma}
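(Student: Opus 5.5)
We need to prove Lemma 5.1, the Aubin identities for $I_p^q=\int_0^\infty t^q(1+t)^{-p}\,dt$. The plan is to reduce all three parts to integration by parts, the Beta function, and an elementary substitution.

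\textbf{Convergence and the Beta function.} The integral converges at $0$ since $q>-1$ (here $q>0$), and at $\infty$ since the integrand behaves like $t^{q-p}$ with $p-q>1$. The substitution $t=s/(1-s)$ gives
\[
I_p^q=B(q+1,\,p-q-1)=\frac{\Gamma(q+1)\Gamma(p-q-1)}{\Gamma(p)}.
\]

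\textbf{Part (1).} Both recursions follow from $\Gamma(x+1)=x\Gamma(x)$. For the first,
\[
\frac{I_{p+1}^q}{I_p^q}=\frac{\Gamma(p-q)\Gamma(p)}{\Gamma(p-q-1)\Gamma(p+1)}=\frac{p-q-1}{p}.
\]
For the second,
\[
\frac{I_{p+1}^{q+1}}{I_{p+1}^q}=\frac{\Gamma(q+2)\Gamma(p-q-1)}{\Gamma(q+1)\Gamma(p-q)}=\frac{q+1}{p-q-1}.
\]
Alternatively, one can integrate by parts directly using $t^q(1+t)^{-p-1}=t^q(1+t)^{-p}-t^{q+1}(1+t)^{-p-1}$ together with $\frac{d}{dt}(1+t)^{-p}=-p(1+t)^{-p-1}$. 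The boundary terms vanish because $q>-1$ and $p>q+1$.

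\textbf{Part (2).} Substitute $t=\varepsilon s$:
\[
\int_0^\delta \frac{t^q}{(t+\varepsilon)^p}\,dt=\varepsilon^{-(p-q-1)}\int_0^{\delta/\varepsilon}\frac{s^q}{(1+s)^p}\,ds .
\]
Hence the difference equals $-\varepsilon^{-(p-q-1)}\int_{\delta/\varepsilon}^\infty s^q(1+s)^{-p}\,ds$. This tail is bounded by $\int_{\delta/\varepsilon}^\infty s^{q-p}\,ds=\frac{(\delta/\varepsilon)^{-(p-q-1)}}{p-q-1}$. The powers of $\varepsilon$ cancel, so the difference is bounded by $\delta^{-(p-q-1)}/(p-q-1)$. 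Moreover, since $s^q(1+s)^{-p}=s^{q-p}\bigl(1+O(1/s)\bigr)$, the difference actually converges to $-\delta^{-(p-q-1)}/(p-q-1)$, so the limit is finite.

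\textbf{Part (3).} Here $p=q+1$, and after the same substitution the integral becomes $\int_0^{\delta/\varepsilon}s^q(1+s)^{-q-1}\,ds$. Split this at $s=1$. The integral over $[0,1]$ is a fixed constant. On $[1,\delta/\varepsilon]$, write
\[
\frac{s^q}{(1+s)^{q+1}}=\frac1s+O(s^{-2}),
\]
which follows from $(1+1/s)^{-q-1}=1+O(1/s)$. The $O(s^{-2})$ part converges as $\varepsilon\to0$, and the $1/s$ part gives $\log(\delta/\varepsilon)=\log\frac1\varepsilon+\log\delta$. Hence the integral minus $\log\frac1\varepsilon$ has a finite limit.

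\textbf{Main obstacle.} The only delicate point is making the asymptotic expansion in part (3) uniform so that the remainder is integrable on $[1,\infty)$. This is handled by the mean value bound $|(1+x)^{-q-1}-1|\le (q+1)x$ for $x\in[0,1]$, applied with $x=1/s$.
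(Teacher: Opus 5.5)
Your proof is correct and complete. Note that the paper gives no proof of this lemma; it is quoted from Aubin. The standard derivation obtains part (1) by integration by parts, which you list only as an alternative: one gets $I_{p+1}^{q+1}=\frac{q+1}{p}I_p^q$ and combines it with the splitting $I_p^q=I_{p+1}^q+I_{p+1}^{q+1}$. Parts (2) and (3) rest on the same rescaling $t=\varepsilon s$ that you use.

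Your main route through $I_p^q=B(q+1,p-q-1)$ gives both recursions at once, plus a closed form for $I_p^q$, at the cost of importing the Beta--Gamma identity. Integration by parts is fully self-contained, and your check that the boundary terms vanish (using $q+1>0$ and $p>q+1$) is exactly what it needs.

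A small simplification in (2): undoing the substitution, the difference is exactly $-\int_\delta^\infty t^q(t+\varepsilon)^{-p}\,dt$. By monotone convergence this tends to $-\delta^{-(p-q-1)}/(p-q-1)$, so the expansion $1+O(1/s)$ is not needed.

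In (3), your mean-value bound $|(1+x)^{-q-1}-1|\le (q+1)x$ is what makes the remainder integrable on $[1,\infty)$. The same computation, with upper limit $(\delta/\varepsilon)^2$ in place of $\delta/\varepsilon$, gives the asymptotic $\int_0^{(\delta/\varepsilon)^2}t^q(1+t)^{-p}\,dt\sim\log\frac{1}{\varepsilon^2}$ that the paper uses in the case $n=4$.
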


\begin{lemma}\cite{Aubin1976} \label{lemma5-2}
Let $(M,g)$ be a Riemannian manifold and 
$(x^{1},\ldots,x^{n})$ be a system of normal coordinates centred at 
$x_{0}\in M$. Denote by $S(r)$ the geodesic sphere of radius $r$ centred at $x_{0}$ such that $r<d$ ($d$ stands for the injectivity radius of the manifold $M$). Define
\[
G(r)=\frac{1}{\omega_{n-1}}\int_{S(r)}\sqrt{|g|}\,d\sigma,
\]
where $d\sigma$ and $\omega_{n-1}$  denote the volume element and the volume of the $(n-1)$-Euclidean unit sphere $\mathbb{S}^{n-1}$, respectively, and $|g|$ is the determinant of the Riemannian metric. We have
\begin{enumerate}
\item The Taylor's expansion of $\sqrt{|g|}$ in a neighborhood of $r=0$ is
given by
\[
\sqrt{|g|}=1-\tfrac{1}{6}R_{ij}x^{i}x^{j}+o(r^{2}),
\]
where $R_{ij}$ are the components of the Ricci curvature.
\item The Taylor's expansion of $G(r)$ in a neighborhood of $r=0$ is
given by
\[
G(r)=1-\frac{R}{6n}r^{2}+o(r^{2}).
\]
\end{enumerate}
\end{lemma}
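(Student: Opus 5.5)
This lemma is quoted from Aubin, and the plan below recovers it from the standard expansion of the metric in normal coordinates.

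\textbf{Part (1).} I would start from the expansion
\[
g_{ij}(x)=\delta_{ij}-\tfrac13 R_{ikjl}x^kx^l+O(r^3),
\]
which follows from the Jacobi field equation along radial geodesics. Writing $g=I+A$ with $A=O(r^2)$, I would use
\[
\log\det g=\operatorname{tr}\log(I+A)=\operatorname{tr}A+O(|A|^2).
\]
This gives $\log\det g=-\tfrac13 R_{kl}x^kx^l+O(r^3)$, after contracting $\delta^{ij}R_{ikjl}=R_{kl}$. Exponentiating and taking the square root then yields
\[
\sqrt{|g|}=1-\tfrac16 R_{ij}x^ix^j+o(r^2).
\]

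\textbf{Part (2).} Here I would integrate the expansion of part (1) over the sphere $S(r)$, whose Euclidean area is $\omega_{n-1}r^{n-1}$. I take the paper's convention for $G$ to mean the normalized average, i.e. the integral divided by $\omega_{n-1}r^{n-1}$; that is the reading under which the constant term equals $1$. The one needed identity is the symmetry relation
\[
\int_{S(r)}x^ix^j\,d\sigma=\frac{r^2}{n}\delta_{ij}\,\omega_{n-1}r^{n-1}.
\]
It follows because off-diagonal terms vanish by odd symmetry and the $n$ diagonal terms are equal and sum to $r^2\cdot|S(r)|$. Substituting gives
\[
G(r)=1-\tfrac16 R_{ij}\tfrac{r^2}{n}\delta^{ij}+o(r^2)=1-\tfrac{R}{6n}r^2+o(r^2),
\]
where $R=R_{ii}(x_0)$ is the scalar curvature at $x_0$. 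The $o(r^2)$ remainder integrates to $o(r^2)$ after normalization, since it is uniform on the sphere.

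\textbf{Main obstacle.} The only nontrivial input is the second-order expansion of $g_{ij}$ in normal coordinates. I would quote it as standard from Aubin or Lee–Parker rather than rederive it. If a self-contained argument is wanted, I would Taylor expand the Jacobi fields $J(t)=t\,\partial_i$ along $t\mapsto t\theta$, using $J''=-R(J,\dot\gamma)\dot\gamma$. This gives
\[
|J|^2=t^2-\tfrac13\langle R(\partial_i,\theta)\theta,\partial_j\rangle t^4+O(t^5),
\]
which is exactly the stated expansion.
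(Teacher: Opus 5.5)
Your proof is correct. The paper gives no argument of its own and simply cites Aubin; your derivation is the standard one behind that citation: the normal-coordinate expansion $g_{ij}=\delta_{ij}-\tfrac13 R_{ikjl}x^kx^l+O(r^3)$, the first-order trace formula for $\det g$, and the spherical average $\int_{S(r)} x^ix^j\,d\sigma=\tfrac{r^2}{n}\delta_{ij}\,\omega_{n-1}r^{n-1}$. Your reading of $G$ as the normalized spherical average is not an extra assumption but exactly the paper's convention, since there $d\sigma$ is declared to be the volume element of the unit sphere $\mathbb{S}^{n-1}$.
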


For the Taylor's expansion of $Q_{\protect\varepsilon}$, we distinguish two cases, the case when $n>4 $ and the case when $n=4$.

\subsection{Taylor's expansion of $Q_{\protect\varepsilon}$ for $n>4 $}

Firstly, We estimate the different terms of $\mu_{\varepsilon}$ in \eqref{eq5-1}
separately. To do so, we use similar arguments as in \cite{Bekiri-Benalili2018,Holcman2001}. We begin with the leading term in $\mu_{\varepsilon}$. We have 
\begin{equation}
\int_{M}a|\nabla
u_{\varepsilon}|^{2}\,dv_{g}=\int_{B\left(x_{0},\delta\right)}a|\nabla
v_{\varepsilon}|^{2}\,dv_{g}+\int_{B\left(x_{0},2\delta\right)\setminus B\left(x_{0},\delta\right)}a|\nabla
(\eta v_{\varepsilon})|^{2}\,dv_{g}.  \label{eq5-4}
\end{equation}
Moreover, we observe that 
\begin{equation*}
\nabla v_{\varepsilon}=\frac{\partial v_{\varepsilon} }{\partial r}=-(n-2)\frac{rv_{\varepsilon}}{
\varepsilon^{2}+r^{2}}.
\end{equation*}
Using polar coordinates and the following expansion (see lemma~\ref{lemma5-2})
\[\sqrt{|g|}=1-\tfrac{1}{6}R_{ij}x^{i}x^{j}+o(r^{2}),\] 
we obtain 
\begin{align*}
\int_{B\left(x_{0},\delta\right)}a|\nabla
v_{\varepsilon}|^{2}\,dv_{g}&= \int_{0}^{\delta}r^{n-1}\big(\frac{\partial v_{\varepsilon} }{\partial r}\big)^2\bigg(\int_{S(r)}a\sqrt{|g|}\,d\Omega\bigg)\\[0.5em]
&=\omega_{n-1}(n-2)^{2}\\[0.5em]
&\times\int_{0}^{\delta}\frac{r^{n+1}v_{\varepsilon}^{2}}{\left(
\varepsilon^{2}+r^{2}\right)^{2}}\left(a(x_{0})-\left( \frac{\Delta_{g}a(x_{0})}{2n}+\frac{a(x_{0})}{6n}R_{g}(x_{0})\right)r^{2}+o(r^{2})  \right) dr
\end{align*}
where $\omega_{n-1}$ denotes the volume of the $(n-1)$-Euclidean unit sphere $
\mathbb{S}^{n-1}$. By using the change of variables $r=\varepsilon\sqrt{t},\; dr=\frac{\varepsilon}{2\sqrt{t}}$, and letting $\varepsilon\rightarrow 0$, we obtain 
\begin{align*}
    \int_{B\left(x_{0},\delta\right)}a|\nabla
v_{\varepsilon}|^{2}\,dv_{g}=&\omega_{n-1}(n-2)^{2} 2^{n-3}\\[0.5em]
& \times\left(a(x_{0})I^{\frac{n}{2}}_{n}-I^{\frac{n}{2}+1}_{n}\left( \frac{\Delta_{g}a(x_{0})}{2n}+\frac{a(x_{0})}{6n}R_{g}(x_{0})\right)\varepsilon^{2}+o(\varepsilon^{2})  \right).
\end{align*}
From the relation 
\begin{equation*}
I^{\frac{n}{2}+1}_{n}=\frac{n+2}{n-4}I^{\frac{n}{2}}_{n},
\end{equation*}
we deduce that
\begin{align}
\int_{B\left(x_{0},\delta\right)}a|\nabla
v_{\varepsilon}|^{2}\,dv_{g}=&\omega_{n-1}(n-2)^{2} 2^{n-3}I^{\frac{n}{2}}_{n}a(x_{0})\nonumber\\[0.5em]
&\times\bigg(1-\frac{n+2}{2n(n-4)}\left( \frac{\Delta_{g}a(x_{0})}{a(x_{0})}+\frac{R_{g}(x_{0})}{3}\right)\varepsilon^{2}+o(\varepsilon^{2})  \bigg).
\label{eq5-5}
\end{align}
Similarly, as $\varepsilon\rightarrow 0$, and $n>4$, we obtain
\begin{equation}
\int_{B\left(x_{0},2\delta\right)-B\left(x_{0},\delta\right)}\vert
\nabla (\eta v_{\varepsilon})\vert^{2}\,dv_{g}=O(\varepsilon^{n-2}).
\label{eq5-6}
\end{equation}
Inserting \eqref{eq5-5}, \eqref{eq5-6} into \eqref{eq5-4}, and using the relations 
\[\omega_{n}=2^{n-1}I^{\frac{n}{2}-1}_{n}\omega_{n-1}\; \text{ and }\; I^{\frac{n}{2}}_{n}=\frac{n}{n-2}I^{\frac{n}{2}-1}_{n},\]
we deduce that 
\begin{equation}
\int_{M}a|\nabla
u_{\varepsilon}|^{2}\,dv_{g}=\frac{n(n-2)\omega_{n}}{4}a(x_{0})\bigg(1-\frac{n+2}{2n(n-4)}\left( \frac{\Delta_{g}a(x_{0})}{a(x_{0})}+\frac{R_{g}(x_{0})}{3}\right)\varepsilon^{2}+o(\varepsilon^{2})  \bigg).  \label{eq5-7}
\end{equation}
For the remaining term in $\mu_\varepsilon$, we have
\begin{equation}
\int_{M}bu_{\varepsilon}^{2}\,dv_{g}=\int_{B\left(x_{0},\delta
\right)}bv_{\varepsilon}^{2}\,dv_{g}+\int_{B\left(x_{0},2\delta\right)\setminus B\left(x_{0},\delta\right)}b(\eta v_{\varepsilon})^{2}\,dv_{g}.
\label{eq5-8}
\end{equation}
Using polar coordinates together with the expansion 
\[\sqrt{|g|}=1-\tfrac{1}{6}R_{ij}x^{i}x^{j}+o(r^{2}),\] 
we obtain 
\begin{samepage}
\begin{align*}
\int_{B\left(x_{0},\delta\right)}bv_{\varepsilon}^{2}\,dv_{g}&= \int_{0}^{\delta}r^{n-1}v_{\varepsilon}^2\bigg(\int_{S(r)}b\sqrt{|g|}\,d\Omega\bigg)\\[0.5em]
&=\omega_{n-1} \int_{0}^{\delta}r^{n-1}\bigg(\frac{2\varepsilon}{
\varepsilon^{2}+r^{2}}\bigg)^{n-2}\\[0.5em]
&\quad\times\left(b(x_{0})-\left( \frac{\Delta_{g}b(x_{0})}{2n}+\frac{b(x_{0})R_{g}}{6n}\right)r^{2}+o(r^{2})  \right) dr.
\end{align*}
\end{samepage}
Applying the change of variables $r=\varepsilon\sqrt{t},\; dr=\frac{\varepsilon}{2\sqrt{t}}$, and letting $\varepsilon\rightarrow 0$, we get
\begin{align*}
\int_{B\left(x_{0},\delta\right)}bv_{\varepsilon}^{2}\,dv_{g}
&=2^{n-3}\omega_{n-1} \left(b(x_{0})I^{\frac{n}{2}-1}_{n-2}\varepsilon^{2}+o(\varepsilon^{2})  \right),
\end{align*}
where
\[I^{\frac{n}{2}-1}_{n-2}=4\frac{(n-2)(n-1)}{n(n-4)}I^{\frac{n}{2}}_{n}.\]
Therefore,
\begin{equation}
    \int_{B\left(x_{0},\delta\right)}bv_{\varepsilon}^{2}\,dv_{g}
=2^{n-3}I^{\frac{n}{2}}_{n}\omega_{n-1} \left(4b(x_{0})\frac{(n-2)(n-1)}{n(n-4)}\varepsilon^{2}+o(\varepsilon^{2})  \right).\label{eq5-9}
\end{equation}
Similarly, one easily checks that 
\begin{equation}
\int_{B\left(x_{0},2\delta\right)\setminus B\left(x_{0},\delta\right)}b(\eta
v_{\varepsilon})^{2}\,dv_{g}=O(\varepsilon^{n-2}).  \label{eq5-10}
\end{equation}
Inserting \eqref{eq5-9} and \eqref{eq5-10}, into \eqref{eq5-8} and using the relations
\[\omega_{n}=2^{n-1}I^{\frac{n}{2}-1}_{n}\omega_{n-1}\; \text{ and }\; I^{\frac{n}{2}}_{n}=\frac{n}{n-2}I^{\frac{n}{2}-1}_{n},\]
we finally deduce that
\begin{equation}
    \int_{M}bu_{\varepsilon}^{2}\,dv_{g}
=\omega_{n} \left(b(x_{0})\frac{(n-1)}{(n-4)}\varepsilon^{2}+o(\varepsilon^{2})  \right).\label{eq5-11}
\end{equation}
Plugging \eqref{eq5-7} and \eqref{eq5-11} into \eqref{eq5-1}, we obtain 
\begin{align}
    \mu_{\varepsilon}&=\frac{n(n-2)\omega_{n}}{4}\, a(x_{0})\notag\\[0.5em]
    &\times\bigg\lbrace1+\frac{1}{2n(n-2)(n-4)}\bigg(8(n-1)\frac{b(x_{0})}{a(x_{0})}-(n^2-4)\big( \frac{\Delta_{g}a(x_{0})}{a(x_{0})}+\frac{R_{g}(x_{0})}{3}\big)\bigg)\varepsilon^2+o(\varepsilon^2)\bigg\rbrace.
\label{eq5-12}
\end{align}
Now, in order to estimate $\gamma_{\varepsilon}$, we split it into two parts. That is 
\begin{equation}
\gamma_{\varepsilon}=\int_{B(x_{0},\delta)}f\left\vert v_{\varepsilon
}\right\vert
^{2^{\sharp}}\,dv_{g}+\int_{B(x_{0},2\delta)\setminus B(x_{0},\delta)}f\left\vert \eta
v_{\varepsilon }\right\vert ^{2^{\sharp}}\,dv_{g}.  \label{eq5-13}
\end{equation}
Using polar coordinates together with the expansion 
\[\sqrt{|g|}=1-\tfrac{1}{6}R_{ij}x^{i}x^{j}+o(r^{2}),\]
we obtain
\begin{align*}
\int_{B\left(x_{0},\delta\right)}f|v_{\varepsilon}|^{2^{\sharp}}\,dv_{g}&= \int_{0}^{\delta}r^{n-1}|v_{\varepsilon}|^{2^{\sharp}}\bigg(\int_{S(r)}f\sqrt{|g|}\,d\Omega\bigg)\\[0.5em]
&=\omega_{n-1} \int_{0}^{\delta}r^{n-1}\bigg(\frac{2\varepsilon}{
\varepsilon^{2}+r^{2}}\bigg)^{n}\\[0.5em]
&\quad\times\left(f(x_{0})-\left( \frac{\Delta_{g}f(x_{0})}{2n}+\frac{f(x_{0})}{6n}R_{g}\right)r^{2}+o(r^{2})  \right) dr.
\end{align*}
Once again, applying the change of variables $r=\varepsilon\sqrt{t},\; dr=\frac{\varepsilon}{2\sqrt{t}}$, and letting $\varepsilon\rightarrow 0$, we obtain 
\begin{align*}
\int_{B\left(x_{0},\delta\right)}f|v_{\varepsilon}|^{2^{\sharp}}\,dv_{g}=&2^{n-1}\omega_{n-1}\\[0.5em]
& \times\left(I^{\frac{n}{2}-1}_{n}f(x_{0})-I^{\frac{n}{2}}_{n}\left( \frac{\Delta_{g}f(x_{0})}{2n}+\frac{f(x_{0})}{6n}R_{g}(x_{0})\right)\varepsilon^{2}+o(\varepsilon^{2})  \right).
\end{align*}
By using the relations
\[I^{\frac{n}{2}}_{n}=\frac{n}{n-2}I^{\frac{n}{2}-1}_{n} \; \text{ and }\;\omega_{n}=2^{n-1}I^{\frac{n}{2}-1}_{n}\omega_{n-1},\]
we deduce that
\begin{equation}
\int_{B\left(x_{0},\delta\right)}f|v_{\varepsilon}|^{2^{\sharp}}\,dv_{g}=\omega_{n}f(x_{0})\left(1-\frac{1}{2(n-2)}\left( \frac{\Delta_{g}f(x_{0})}{f(x_{0})}+\frac{R_{g}(x_{0})}{3}\right)\varepsilon^{2}+o(\varepsilon^{2})  \right).\label{eq5-14}
\end{equation}
One can easily check that
\begin{equation}
\int_{B(x_{0},2\delta)\setminus B(x_{0},\delta)}f| \eta v_{\varepsilon
}| ^{2^{\sharp}}\,dv_{g}=O(\varepsilon^{n-2}).\label{eq5-15}
\end{equation}
Inserting \eqref{eq5-14} and \eqref{eq5-15} into \eqref{eq5-13}, we obtain 
\begin{equation}
\gamma_{\varepsilon}=\omega_{n}f(x_{0})\left(1-\frac{1}{2(n-2)}\left( \frac{\Delta_{g}f(x_{0})}{f(x_{0})}+\frac{R_{g}(x_{0})}{3}\right)\varepsilon^{2}+o(\varepsilon^{2})  \right).\label{eq5-16}
\end{equation}
Consequently,
\begin{align}
    \gamma_{\varepsilon}^{-\frac{2}{2^{\sharp}}}&=\omega_{n}^{\frac{2-n}{n}}f(x_{0})^{\frac{2-n}{n}}\left(1+\frac{1}{2n}\left( \frac{\Delta_{g}f(x_{0})}{f(x_{0})}+\frac{R_{g}(x_{0})}{3}\right)\varepsilon^{2}+o(\varepsilon^{2})\right).\label{eq5-17}
\end{align}
Finally, substituting \eqref{eq5-12} and \eqref{eq5-17} into \eqref{eq5-3}, and using the fact that
\[\frac{1}{K_{0}} = \frac{n(n - 2)}{4} \, \omega_n^{2/n},\]
we conclude that
\begin{equation*}
Q_{\varepsilon}=1+\frac{H(x_{0})}{2n(n-2)(n-2)(n-4)}\varepsilon^{2}+o(\varepsilon^{2}),
\end{equation*}
where 
\[H(x_{0})=(n-2)(n-4)\frac{\Delta_{g}f(x_{0})}{f(x_{0})}-2(n-2)R_{g}(x_{0})+8(n-1)\frac{b(x_{0})}{a(x_{0})}-(n^2-4)\frac{\Delta_{g}a(x_{0})}{a(x_{0})}.\]
Thanks to condition~\eqref{eq1-5} (see Theorem~\ref{Theorem1-1}), the quantity $H(x_{0})$ is strictly negative. Therefore
\begin{equation*}
Q_{\varepsilon}<1.
\end{equation*}

\subsection{ Taylor's expansion of $Q_{\protect\varepsilon}$ for $n=4$}

In this case, we use the same techniques as before. We note that the
expansion of $\gamma_{\varepsilon}$ remains the same as in the previous case.
We start with the leading term in $\mu_{\varepsilon}$ and take into account the following estimate (see Lemma~\ref{lemma5-2}): for any real numbers $p$, $q$ such that $p-q-1=0$, as $\varepsilon \to 0$, one has 
\begin{equation*}
\int_{0}^{(\frac{\delta }{\varepsilon })^{2}}\frac{t^{q}}{\left( 1+t\right)
^{p}}dt\sim \log \frac{1}{\varepsilon ^{2}}.
\end{equation*}
Therefore
\begin{align*}
\int_{M}a|\nabla
u_{\varepsilon}|^{2}\,dv_{g}&=\omega_{n-1}(n-2)^{2} 2^{n-3}\\[0.5em]
&\quad\times\bigg(I^{\frac{n}{2}}_{n}a(x_{0})-\left( \frac{\Delta_{g}a(x_{0})}{2n}+\frac{a(x_{0})}{6n}R_{g}(x_{0})\right)\varepsilon^{2}\log\frac{1}{\varepsilon^{2}}+o(\varepsilon^{2})  \bigg)\notag\\[0.5em]
&=\omega_{n-1}(n-2)^{2} 2^{n-3}I^{\frac{n}{2}}_{n}a(x_{0})\\[0.5em]
&\quad\times\bigg(1-\frac{1}{2nI^{\frac{n}{2}}_{n}}\left( \frac{\Delta_{g}a(x_{0})}{a(x_{0})}+\frac{R_{g}(x_{0})}{3}\right)\varepsilon^{2}\log\frac{1}{\varepsilon^{2}}+o(\varepsilon^{2})\bigg).
\end{align*}
Using the relations,
\[\omega_{n}=2^{n-1}I^{\frac{n}{2}-1}_{n}\omega_{n-1}\; \text{ and }\; I^{\frac{n}{2}}_{n}=\frac{n}{n-2}I^{\frac{n}{2}-1}_{n},\]
we deduce that
\begin{align}
\int_{M}a|\nabla
u_{\varepsilon}|^{2}\,dv_{g}=&\omega_{n}\frac{n(n-2)}{4}a(x_{0})\nonumber\\[0.5em]
&\times\bigg(1-\frac{1}{2nI^{\frac{n}{2}}_{n}}\left( \frac{\Delta_{g}a(x_{0})}{a(x_{0})}+\frac{R_{g}(x_{0})}{3}\right)\varepsilon^{2}\log\frac{1}{\varepsilon^{2}}+o(\varepsilon^{2})\bigg).\label{eq5-18}
\end{align}
For the remaining term, we have
\begin{equation}
    \int_{M}bv_{\varepsilon}^{2}\,dv_{g}
=2^{n-3}\omega_{n-1} \left(b(x_{0})\varepsilon^{2}\log\frac{1}{\varepsilon^{2}}+o(\varepsilon^{2})  \right).\label{eq5-19}
\end{equation}
Plugging \eqref{eq5-18} and \eqref{eq5-19} into \eqref{eq5-1}, we obtain \newpage
    \begin{align}
    \mu_{\varepsilon}&=\frac{n(n-2)\omega_{n}}{4}\, a(x_{0})\notag\\[0.5em]
    &\times\bigg\lbrace1+\frac{1}{6nI^{\frac{n}{2}}_{n}(n-2)^2}\bigg(6n\frac{b(x_{0})}{a(x_{0})}-(n-2)^2\bigg( \frac{3\Delta_{g}a(x_{0})}{a(x_{0})}+R_{g}(x_{0})\bigg)\bigg)\varepsilon^2\log\frac{1}{\varepsilon^2}+o(\varepsilon^2)\bigg\rbrace.
\label{eq5-20}
\end{align}
The expansion of $\gamma_{\varepsilon}$ remains the same as in the previous case. By substituting \eqref{eq5-12} and \eqref{eq5-20} into \eqref{eq5-3}, and using the fact that
\[\frac{1}{K_{0}} = \frac{n(n - 2)}{4} \, \omega_n^{2/n},\]
we finally obtain
\begin{equation*}
Q_{\varepsilon }=1+\frac{1}{6nI^{\frac{n}{2}}_{n}(n-2)^2}\bigg\lbrace6n\frac{b(x_{0})}{a(x_{0})}-(n-2)^2\bigg( \frac{3\Delta_{g}a(x_{0})}{a(x_{0})}+R_{g}(x_{0})\bigg)\bigg\rbrace\varepsilon^2\log\frac{1}{\varepsilon^2}+o(\varepsilon^2).
\end{equation*}
Thanks to condition~\eqref{eq1-5} (see Theorem~\ref{Theorem1-1}), for $n=4$, one has   
\[6n\frac{b(x_{0})}{a(x_{0})}-(n-2)^2\bigg( \frac{3\Delta_{g}a(x_{0})}{a(x_{0})}+R_{g}(x_{0})\bigg)=4\bigg(6\frac{b(x_{0})}{a(x_{0})}-\frac{3\Delta_{g}a(x_{0})}{a(x_{0})}+R_{g}(x_{0})\bigg)<0.\]
Consequently, 
\begin{equation*}
Q_{\varepsilon}<1.
\end{equation*}

\end{document}